\def\car{\mbox{\rm{1\hspace{-0.10 cm }I}}}
\newtheorem{prop}{Proposition}[section]
\newtheorem{teor}{Theorem}[section]
\newtheorem{remark}{Remark}[section]
\newtheorem{lema}{Lemma}[section]
\numberwithin{equation}{section}
\theoremstyle{plain}
\begin{document}

\begin{frontmatter}
\title{A Robbins-Monro algorithm for nonparametric estimation of  NAR process with Markov-Switching: consistency}
\runtitle{Nonparametric estimation of MS-NAR process}
%\thankstext{T1}{Footnote to the title with the ``thankstext'' command.}

\begin{aug}
\author{\fnms{Lisandro} \snm{Ferm\'in}\thanksref{t1,t2, m1}\ead[label=e1]{lisandro.fermin@uv.cl}},
\author{\fnms{Ricardo} \snm{R\'{\i}os}\thanksref{m2}\ead[label=e2]{ricardo.rios@ciens.ucv.ve}}
\and\\
\author{\fnms{Luis-Angel} \snm{Rodr\'{\i}guez}\thanksref{t1,t3,m1,m3}
\ead[label=e3]{larodri@uc.edu.ve}
%\ead[label=u1,url]{http://www.foo.com}
}

\thankstext{t1}{Support by project Anillo ACT1112}
\thankstext{t2}{Support by project DIUV REG Nº02/2011}
\thankstext{t3}{Sabbatical support from Universidad de Carabobo}
\runauthor{Ferm\'in, R\'{\i}os and Rodr\'{\i}guez}

\affiliation{Universidad de Valpara\'{\i}so\thanksmark{m1}, Universidad Central de Venezuela\thanksmark{m2}, and Universidad de Carabobo\thanksmark{m3}}

\address{
%Address of the First and Third authors\\
CIMFAV, Facultad de Ingenier\'{\i}a,\\
Universidad de Valpara\'{\i}so, Chile.\\
\printead{e1}\\
%\phantom{E-mail:\ }\printead*{e2}
}

\address{
%Address of the Second author\\
Escuela de Matem\'aticas,\\ 
Facultad de Ciencias\\ 
Universidad Central de Venezuela.\\
\printead{e2}\\
%\printead{u1}
}

\address{
%Address of the Third author\\
Dpto. de Matem\'aticas, FACYT,\\ 
Universidad de Carabobo, Venezuela.\\
\printead{e3}\\
}
\end{aug}

\begin{abstract}
We consider nonparametric estimation for autoregressive processes with Mar\-kov switching.
In this context, the Nadaraya-Watson type estimator of regression funtions is interpreted as solution 
of a local weighted least-square problem, which does not closed-form solution in the case of hidden Markov 
switching. We introduce a nonparametric recursive algorithm to approximate the estimator. Our algorithm  restores 
the missing data by means of a Monte-Carlo step and estimate the regression function via a Robbins-Monro step. 
Consistency of the estimator is proved using the strong $\alpha$-mixing property of the model. Finally, we present 
some simulations illustrating the performances of our nonparametric estimation procedure.
\end{abstract}

\begin{keyword}[class=MSC]
\kwd[Primary ]{60G17}
%\kwd{60K35}
\kwd[; secondary ]{62G07}
\end{keyword}

\begin{keyword}
\kwd{Autoregressive process}
\kwd{Markov switching}
\kwd{Robbins-Monro approximation}
\kwd{Nonparametric kernel estimation}
\end{keyword}

\end{frontmatter}

\section{Introduction}
Switching autoregressive processes with Markov re\-gi\-me can be looked at as a combination of hidden Markov models (HMM) and threshold
regression models. The switching autoregressive process have been introduced in an econometric
context by Goldfeld and Quandt (1973) \cite{Goldfeld} and they have
become quite popular in the literature ever since Hamilton (1989)
\cite{Hamilton} employed them in the analysis of the gross
internal product of the USA for two regimes: one of contraction and another of expansion. This family of models, combines different
autoregressive models to describe the evolution of the process at different periods of time, the transition between these different 
autoregressive models being controlled by a HMM. 

Switching linear autoregressive processes with Markov regime have been extensively studied and several applications in economics and 
finance can be found in, for instance, Krolzig (1997) \cite{Krolzig}, Kim and Nelson (1999) \cite{Kim-Nelson}, 
Hamilton and Raj (2003) \cite{Hamilton-Raj}. This models are also widely used in several electrical engineering areas including 
tracking of maneuvering targets, failure detection, wind power production 
and stochastic adaptive control; see for instance, Tugnait (1982), Doucet {\it et al.} (2000), Douc {\it et al.} (2005) \cite{cappe-moulines-ryden}, and Ailliot and Monbet (2012) \cite{ailliot}.

Switching non linear autoregressive models with Markov regime  have considerable interest in the statistical community, especially for
econometric series modelling. Such models were considered in particular by Francq 
and Roussignol (1997), \cite{francq-Roussignol}. They consider models that admit an additive decomposition, with particular interest in
the switching ARCH models, Franq {\it et al.} (2001) \cite{francq-Roussignol-Zakoian}. Krishnarmurthy and Ryd\'en (1998) \cite{Krish}, 
Douc {\it et al.} (2004) \cite{douc} studied an even more general 
class of switching non linear autoregressive processes that do not necessarily admit an additive decomposition. 

We consider a particular type of switching non linear autoregressive models with Markov regime, called Markov Switching Nonlinear Autorregresive process (MS-NAR) and defined by:
\begin{equation}\label{modelo}
    Y_k=r_{X_k}(Y_{k-1})+e_k
\end{equation}
where $\{e_k\}_{k\geq1}$ are i.i.d. random variables,  the sequence $\{X_k\}_{k\geq1}$
is an homogeneous Markov chain with state space $\{1,\ldots,m\}$, and $r_1(y),\ldots, r_m(y)$ are the regression functions, assumed
to be unknown.
We denote by $A$ the probability transition matrix of the Markov chain $X$, i.e. $A=[a_{ij}]$, with $a_{ij}=\mathbb{P}(X_k=j|X_{k-1}=i)$. 
We assume that the variable $Y_0$, the Markov
chain $\{X_k\}_{k\geq1}$ and the sequence $\{e_k\}_{k\geq1}$ are mutually
independents.

This model is a generalization of switching linear autoregressive models with Markov regime, also known as MS-AR model. When the regression functions $r_i$ are linear the MS-NAR process is simply a MS-AR model. 

In the parametric case, i.e. when the regression functions depend on an unknown parameter, 
the maximum likelihood estimation method is commonly used. The consistency of the maximum likelihood estimator for the MS-NAR model is given 
in Krishnarmurthy and Ryd\'en (1998) \cite{Krish}, while the consistency and asymptotic normality are proved in a more general context in Douc {\it et al.} (2004) \cite{douc}.  Several versions of the EM algorithm and its variants,
for instance SEM, MCEM, SAEM are implemented for the computations of the maximum likelihood estimator, we refer to  \cite{cappe-moulines-ryden}. 
A semiparametric estimation for the MS-NAR model was studied in R\'{\i}os 
and Rodr\'{\i}guez (2008) \cite{luis1}, where the authors consider a 
conditional least square approach for the parameter estimation and a kernel density estimator 
for the estimation of the innovation density probability.

Nevertheless, the first question that must be addressed before studying ``consistency" of any estimation procedure for this models is whether the autoregresion functions are identified. In the context of the nonparametric estimation of MS-NAR model, this problem has not yet been discussed in the literature. A treatment of this problem can be approached following the recent results given by De Castro {\it et al}. (2015) \cite{DeCastro} for nonparametric estimation of HMM models. The general idea is to identify the markov regime, and then to ensure that the estimation method provides a unique estimate. We considere nonparametric estimators obtained through the minimization of a quadratic contrast, which has a unique minimum given by the Nadaraya-Watson estimator when the Markov chain is observed, ensuring in this case that the regression functions are identified non-parametrically. However, in the case of partially observed data we can prove the identifiability up to label swapping of the hidden states. We do not address in this paper a rigorous proof of this statement, because its content is enough material for other work, moving away from the objectives and techniques outlined in this article.

In this work we consider a nonparametric regression model. That is, for $i=1,\ldots,m$, we define  a Nadaraya-Watson
type kernels estimator, given by
\begin{equation}\label{NWEstim}
\hat{r}_i(y)=\frac{\sum_{k=1}^{n-1}Y_{k+1}K\left(\frac{y-Y_k}{h}\right)\car_{i}(X_{k+1})}
{\sum_{k=1}^{n-1}K\left(\frac{y-Y_k}{h}\right)\car_{i}(X_{k+1})}.
\end{equation}
This Nadaraya-Watson type estimator was introduced for HMM models 
in  Harel and Puri \cite{Harel-Puri}.

In the first part, we establish the uniform consistency, assuming that a realization of the complete data $\{Y_k,X_k\}_{k=1,\ldots, n}$ is known; i.e. the convergence over compact subsets $\mathrm{C}\subset\mathbb{R}$, 
$$
\sup_{y\in\mathrm{C}}|\hat{r}_i(y)-r_i(y)|\to 0, \, \, \mbox{a.s.} \,\,(\mbox{when } n\to\infty).
$$

This is an interesting asymptotic result, but the key feature of MS-NAR models is that the state sequence $\{X_k\}_{k\geq1}$ is generally 
not observable, so that statistical inference has to be carried out by means of the observations $\{Y_k\}_{k\geq1}$ only. 

In the nonparametric context, the estimators of regression functions $r_{i}(y)$, for each $y$ and $i=1, \ldots, m$, can be interpreted as solutions $\theta=(\theta_1,\ldots,\theta_m)$ of the local weighted least-squares problem
$$
U(y, Y_{1:n}, X_{1:n},\theta)=\sum_{k=1}^n\sum_{i=1}^{m}K\left(\frac{y-Y_k}{h}\right)\car_i(X_{k+1})(Y_{k+1}-\theta_i)^2,
$$
where the weights are specified by the kernel $K$, so that the observations $Y_k$ near to $y$ 
has the largest influence on the estimate of the regression function at $y$. This is,
$$
\hat{\theta}(y)=\underset{\theta\in\Theta\subset \mathbb{R}^m}{\mbox{argmin}}\; U(y, Y_{1:n}, X_{1:n},\theta).
$$ 

When a realization of the state sequence $\{X_k\}_{k\geq1}$ is observed the solutions of this problem is the Nadaraya-Watson kernel estimators $\hat{r}_i$ defined in \eqref{NWEstim}. Nevertheless, when $\{X_k\}_{k\geq1}$ is a hidden Markov chain, the solution do not have closed-form, and we need to approximate it.   

In the second part, we propose a recursive algorithm for the estimation of the regression
functions $r_{i}$ with a Monte-Carlo step which restores the missing data $\{X_k\}_{k\geq1}$
by $X_{1:n}^{t}$, and a Robbins-Monro procedure in order to estimate the unknown value of $\theta$.
This approximation minimizes the potential $ U $ by the gradient algorithm
$$
\theta^{t}=\theta^{t-1}-\gamma_t\nabla_{\theta} U\left(y, Y_{1:n}^t, X_{1:n}^t,\theta^{t-1}\right),
$$
with $\{\gamma_t\}$ any sequence of real positive numbers decreasing to $0$, and $\nabla_{\theta} U$ the gradient of $U$ with respect to the vector $\theta\in \mathbb{R}^m$. 

In a general context, the Robbins-Monro approach  is study in Duflo \cite{duflo}.
Whereas EM-type algorithms with kernel estimation are used
in Benaglia {\it et. al.} \cite{benaglia} for 
finite mixtures of nonparametric multivariate densities, 
and for finite mixture of nonparametric autoregression with independents regime in Franke {\it et. al.} \cite{franke-stockis}. We 
establish the  consistency  of the estimator obtained by our Robbins-Monro algorithm. 
This asymptotic property is obtained for each fixed point $y$.  

The paper is organized as follows. In Section 2, we present
general conditions on the model that ensure the existence of a probability density distribution, the stability of the model, and we prove that it
satisfies the strong mixing dependence condition. Futhermore, we prove the uniform consistency of the Naradaya-Watson kernels estimator in the case of complete data.
Section 3, we prove the main result, the consistency  of estimator related to our Robbins-Monro algorithm. Section 4 contains some
numerical experiments on simulated data illustrating the performances of our nonparametric
estimation procedure. The some proofs are deferred to the Appendix \ref{appA}.

\section{Preliminary} 
In this section we shall review the key properties of
MS-NAR model, that we shall need for proving results. Then, we prove the uniform consistency of the Nadaraya-Watson kernel estimator when is assumed that a realization of complete data is available.

\subsection{Stability and existence of moments}

The study of the stability of the model is relatively complex for the MS-NAR model. In this section we recall known
results over the stability of this model given by Yao and Attali \cite{Yao}. Our aim is to resume the sufficient conditions 
which ensure the existence and the uniqueness of a stationary ergodic solution for the model, as well as the existence of moment 
of order $s\geq 1$ of the respective stationary distribution.  
 
\begin{enumerate}
\item[\bf E1] The Markov chain $\{X_k\}_{k\geq 1}$ is positive recurrent. Hence, it has an invariant distribution that we 
denote by $\mu=(\mu_1,\ldots,\mu_m)$.

\item[\bf E2] The functions $r_i$, for $i=1,...,m$, are continuous. 

\item[\bf E3] There are exists positive constants $\rho_i, b_i$, $i=1,...,m$,  such that for $y\in\mathbb{R}$, the following holds
$$
|r_i(y)|\leq \rho_i|y|+b_i.
$$

\item[\bf E4] $\mathbb{E}_\mu(\log(\rho(X)))=\sum_{i=1}^m\log \rho_{i}\mu_i<0$, 
where $X$ is random variable with values in $\{1,\ldots,m\}$ and distribution $\mu$.

\item[\bf E5] $\mathbb{E}(|e_1|^s)<\infty$, for some $s\geq 1$.

\item[\bf E6] The sequence $\{e_k\}_{k\geq1}$  of random variables has common density probability function $\Phi(e)$ 
with respect to the Lebesgue measure.

\item[\bf E7] There exists $b>0$ such that $\inf_{e\in\mathrm{C}}\Phi(e)>b$, where $\mathrm{C}$ is a compact set of $\mathbb{R}$.
\end{enumerate}

Condition E1 implies that $\{(Y_k,X_k)\}_{k\geq1}$
with space states $\mathbb{R}\times\{1,\ldots,m\}$ is a  Markov process. Under condition E2 this is a Feller chain and it is a strong Feller chain if in addition the condition E6 holds.

The model is called sublinear if conditions E2 and E3 hold. For MS-NAR sublinear the following result is given.

\begin{prop}[ Yao and Attali] Consider a sublinear MS-NAR. Assuming E1-E7, we have 
\begin{itemize}
\item[i)] There exists a uniqueness stationary geometric ergodic  solution.

\item[ii)] If the spectral radius of the matrix $Q_s=\left( \rho_j^s \, a_{ij}\right)_{i,j=1\ldots m}$ is strictly less than 1, with $s$ the same that in condition E4, then $\mathbb{E}(|Y_k|^s)<\infty$.
\end{itemize}
\end{prop}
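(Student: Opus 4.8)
The plan is to regard $\{(Y_k,X_k)\}_{k\ge1}$ as a Markov chain on $E:=\mathbb{R}\times\{1,\dots,m\}$ with transition kernel $P$ and to verify the hypotheses of the classical Foster--Lyapunov criterion for geometric ergodicity (see Meyn and Tweedie): $\psi$-irreducibility, aperiodicity, and a geometric drift inequality
\[
PV(y,i)\ \le\ \lambda\,V(y,i)+b\,\car_C(y,i),\qquad \lambda<1,\ b<\infty,
\]
with $V\ge1$ and $C$ a small set. Once such a $V$ is produced, the criterion yields that $P$ is positive Harris recurrent, hence has a unique invariant probability $\pi$, is $V$-geometrically ergodic, and satisfies $\pi(V)<\infty$; initializing $(Y_0,X_0)\sim\pi$ and iterating \eqref{modelo} then gives the stationary geometrically ergodic solution of i), unique in law because a positive recurrent $\psi$-irreducible chain has exactly one invariant probability, while ii) will fall out of a sharper choice of $V$. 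The main obstacle is the drift inequality, more precisely extracting a usable decay exponent out of E4; everything else is routine.

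The soft ingredients come first. By E2 the kernel $P$ is Feller, and by E6 it is strong Feller, as already noted. Combining E6 (existence of the density $\Phi$), E7 (the uniform lower bound $\inf_{\mathrm C}\Phi>b$ on a compact) and E1 (so that $X$ is irreducible on the support of $\mu$), one checks that every ``strip'' $K\times\{1,\dots,m\}$ with $K\subset\mathbb{R}$ compact is accessible and admits a one-step minorization $P((y,i),\cdot)\ge\beta\,\nu(\cdot)$ for $(y,i)$ in a fixed such strip; hence $P$ is $\psi$-irreducible, aperiodic, and every compact subset of $E$ is small. No difficulty is expected here.

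The drift inequality is the crux, and where E3 and especially E4 enter. The naive guess $V(y,i)=c_i(1+|y|)$ gives, via \eqref{modelo} and E3,
\[
PV(y,i)=\sum_j a_{ij}c_j\,\mathbb{E}\big[1+|r_j(y)+e_1|\big]\ \le\ |y|\sum_j \rho_j a_{ij}c_j+\text{const}\ =\ |y|\,(Q_1c)_i+\text{const},
\]
so one would need a strictly positive $c$ with $Q_1c\le\lambda c$, i.e. $\mathrm{sp}(Q_1)<1$; but E4 is strictly weaker than this (the $\mu$-mean of $\log\rho$ can be negative while $\mathrm{sp}(Q_1)>1$), so no linearly growing $V$ can work. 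The fix is to pass first to a small exponent: writing $\mathrm{sp}(Q_s)=e^{\Lambda(s)}$ and perturbing the (simple) Perron eigenvalue at $s=0$, where $Q_0=A$ has right and left eigenvectors $\uno$ and $\mu$, one gets $\Lambda(0)=0$ and $\Lambda'(0)=\sum_i\mu_i\log\rho_i<0$ by E4; hence $\mathrm{sp}(Q_{s_0})<1$ for some $s_0\in(0,1]$. Now set $V(y,i)=c_i(1+|y|^{s_0})$ with $c:=\sum_{n\ge0}\lambda^{-n}Q_{s_0}^n\uno$ for a fixed $\lambda\in(\mathrm{sp}(Q_{s_0}),1)$; the series converges and $c>0$ satisfies $Q_{s_0}c\le\lambda c$ (this resolvent construction copes with a possibly reducible $Q_{s_0}$). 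Using subadditivity $|a+b|^{s_0}\le|a|^{s_0}+|b|^{s_0}$, E3, and $\mathbb{E}|e_1|^{s_0}<\infty$ (from E5, as $s_0\le1\le s$),
\[
PV(y,i)\ \le\ |y|^{s_0}(Q_{s_0}c)_i+\text{const}\ \le\ \lambda\,|y|^{s_0}c_i+\text{const}\ \le\ \lambda\,V(y,i)+\text{const};
\]
since $V(\cdot,i)\to\infty$ off compacts, the additive constant is absorbed into $(1-\lambda')V$ outside a large strip $C$, which gives the required geometric drift toward the small set $C$. The Foster--Lyapunov criterion then yields i).

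For ii) one repeats the last computation with $V(y,i)=c_i(1+|y|^s)$, $c$ now the positive super-eigenvector associated with the new hypothesis $\mathrm{sp}(Q_s)<1$. Since $s\ge1$, subadditivity is replaced by the convexity bound $|a+b|^s\le(1+\varepsilon)|a|^s+C_\varepsilon|b|^s$; applying it to $r_j(y)+e_1$ and then to $\rho_j|y|+b_j$ gives $PV(y,i)\le(1+\varepsilon)^2\lambda\,|y|^s c_i+\text{const}$, and choosing $\varepsilon$ with $(1+\varepsilon)^2\lambda<1$ produces again a geometric drift $PV\le\lambda''V+b\,\car_C$, $\lambda''<1$. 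Hence $\pi(V)<\infty$, i.e. $\mathbb{E}_\pi|Y_0|^s<\infty$, which for the stationary solution is exactly $\mathbb{E}|Y_k|^s<\infty$ (and, started from any $Y_0$ with $\mathbb{E}|Y_0|^s<\infty$, the same inequality iterated gives $\sup_k\mathbb{E}|Y_k|^s<\infty$). As flagged above, the only genuinely delicate step is the eigenvalue-perturbation passage from E4 to an exponent $s_0$ with $\mathrm{sp}(Q_{s_0})<1$; the rest is the standard drift-and-minorization machinery.
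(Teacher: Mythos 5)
The paper itself offers no proof of this proposition: it is recalled verbatim from Yao and Attali \cite{Yao}, so there is nothing internal to compare against. Your Foster--Lyapunov sketch is the standard route to exactly this kind of statement and is sound in outline: treating $\{(Y_k,X_k)\}$ as a chain on $\mathbb{R}\times\{1,\dots,m\}$, getting $\mathrm{sp}(Q_{s_0})<1$ for some small $s_0>0$ by perturbing the Perron root of $Q_s$ at $s=0$ (with $\Lambda'(0)=\sum_i\mu_i\log\rho_i<0$ from E4), building a positive super-eigenvector by the resolvent series, and running the drift $V(y,i)=c_i(1+|y|^{s_0})$ for part i) and $V(y,i)=c_i(1+|y|^{s})$ under $\mathrm{sp}(Q_s)<1$ for part ii) is precisely the mechanism that delivers geometric ergodicity, uniqueness of the stationary law, and $\pi(V)<\infty$, i.e.\ $\mathbb{E}|Y_k|^s<\infty$. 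This is in the same spirit as the cited source, so as a reconstruction of the omitted argument it is acceptable.

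Two steps are dispatched faster than they deserve. First, the ``no difficulty is expected here'' claim for $\psi$-irreducibility, aperiodicity and smallness of compact strips is not automatic under E7 as literally stated: $\Phi$ is only bounded below on one fixed compact $\mathrm{C}$, so the region where the one-step transition density is guaranteed positive is $r_j(y)+\mathrm{C}$, which moves with $y$ and can have empty intersection over a large strip; moreover some $a_{ij}$ may vanish, so the minorization measure must respect the communication structure of $A$ and may require several steps rather than one. Second, the eigenvalue-perturbation step uses that the Perron eigenvalue of $Q_0=A$ is simple with right/left eigenvectors $\uno$ and $\mu$; that needs $A$ to have a single closed class (irreducibility), which is slightly more than the verbatim statement of E1 (positive recurrence plus existence of $\mu$), although it is clearly the intended reading. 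Neither point breaks the argument, but both should be stated as hypotheses or argued, as they are in the cited reference.
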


\begin{remark} For the Markov chain's stability the moment condition $s\geq 1$ is enough, but for 
the asymptotic properties of kernel estimator will be necessary $s>2$.
\end{remark}

\subsection{Probability density }

We present a technical lemma where one of the results states the existence of conditional densities of model, 
and in addition we give a factorization of this density probability. This factorization will be very useful in the next sections.

Let us first introduce some notations:

\begin{itemize}
\item $V_{1:n}$ stands for the random vector $(V_1,\ldots,V_n)$, and $v_{1:n}=(v_1,\ldots,v_n)$ we mean
a realization of the respective random vector.
\item The symbol $\car_B(x)$ denotes the indicator function of set $B$, which assigns the value $1$ if $x\in B$ and  $0$ otherwise.
\item $p(V_{1:n}=v_{1:n})$ denotes the density distribution of random vector $V_{1:n}$ evaluated at $v_{1:n}$.
\end{itemize}

%For each $1\leq i\leq m$,
%\begin{itemize}
%\item $n_i=\sum_{k=1}^n\car_{j}(X_k)$ is the number of visits of the Markov chain $\{X_n\}$ to state $i$ in the first $n$ steps and $n_{ij}=\sum_{k=1}^{n-1}\car_{i,j}(X_{k-1},X_k)$ is the number of transitions from $i$ to $j$ in $n$ steps.
%\item We have $\theta_i=r_i$, $\theta=(\theta_1,\ldots,\theta_m)$, $\sigma^2=\mathbb{E}(e_n^2)$ and $\psi=(\theta,A,\sigma^2)$.
%\end{itemize}

We consider the following assumption :

\begin{enumerate}
\item[\bf D1] The random variable $Y_0$ has a density function $p(Y_0=y_0)$ with respect to Lebesgue measure.
\end{enumerate}

The following lemma is relevant in the frame of  kernel estimation.

\begin{lema}
\label{exisdensidad}
Under conditions D1 and E6,
\begin{itemize}
\item[i)] The random vector $(Y_{0:n},X_{1:n})$ admits the probability density function
$p(Y_{0:n}=y_{0:n},X_{1:n}=x_{1:n})$, equal to
$$
\Phi(y_n-r_{x_n}(y_{n-1}))\cdots\Phi(y_1-r_{x_1}(y_{0}))a_{x_{n-1}x_n}\cdots
a_{x_{1}x_2}\mu_{x_1}p(Y_0=y_0)
$$
with respect to the product measure $\lambda\otimes\mu_c$, where $\lambda$ and $\mu_c$
denote Lebesgue and counting measures, respectively. 
\item[ii)] If $\Phi$ is a bounded density, then the joint density of $(Y_k,Y_{k'})$
satisfies 
$$
p(Y_k=y_k,Y_{k'}=y_{k'})\leq \|\Phi\|_{\infty}^2\ \mbox{for $k,k'\geq 1$.}
$$
\end{itemize}
\end{lema}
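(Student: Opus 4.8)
For part i) the natural route is the data‑generating representation of the model: conditionally on a path $X_{1:n}=x_{1:n}$, the vector $(Y_0,Y_1,\dots,Y_n)$ is the image of the input vector $(Y_0,e_1,\dots,e_n)$ under the deterministic triangular map $T_{x_{1:n}}$ defined recursively by $y_k=r_{x_k}(y_{k-1})+e_k$. First I would note that, by the assumed mutual independence of $Y_0$, $\{X_k\}_{k\ge1}$ and $\{e_k\}_{k\ge1}$, the vector $(Y_0,e_1,\dots,e_n)$ is independent of $X_{1:n}$, so conditionally on $X_{1:n}=x_{1:n}$ it keeps its unconditional law, which by D1, E6 and the i.i.d.\ assumption has density $p(Y_0=y_0)\prod_{k=1}^{n}\Phi(e_k)$ with respect to $\lambda^{\otimes(n+1)}$.

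The next step is to push this density forward through $T_{x_{1:n}}$. Although only continuity (indeed, only measurability) of the $r_i$ is available, no smoothness is needed: $T_{x_{1:n}}$ is a bijection of $\mathbb{R}^{n+1}$ with inverse $e_k=y_k-r_{x_k}(y_{k-1})$, and since its $k$‑th coordinate is a measurable function of $y_0,e_1,\dots,e_{k-1}$ plus the fresh variable $e_k$, integrating any test function against it factors, by Fubini, into a chain of one‑dimensional translations, each of which preserves Lebesgue measure. Hence the change of variables $y_k\leftrightarrow e_k$ is legitimate with Jacobian $1$, and the conditional density of $Y_{0:n}$ given $X_{1:n}=x_{1:n}$ is $p(Y_0=y_0)\prod_{k=1}^{n}\Phi\big(y_k-r_{x_k}(y_{k-1})\big)$. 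Multiplying by $\mathbb{P}(X_{1:n}=x_{1:n})=\mu_{x_1}\prod_{k=1}^{n-1}a_{x_k x_{k+1}}$ (Markov property of $\{X_k\}$ with initial law $\mu$, the invariant law of E1 for the stationary version) and summing the conditional decomposition $\mathbb{E}[\psi]=\sum_{x_{1:n}}\mathbb{P}(X_{1:n}=x_{1:n})\,\mathbb{E}[\psi(Y_{0:n},x_{1:n})\mid X_{1:n}=x_{1:n}]$ over $x_{1:n}$ identifies the displayed function as the density with respect to $\lambda^{\otimes(n+1)}\otimes\mu_c^{\otimes n}$, which is i).

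For part ii) I would simply marginalise the formula of i). Assume without loss of generality $1\le k<k'$, keep $y_k$ and $y_{k'}$ fixed, and sum over $x_{1:k'}$ while integrating over the remaining coordinates $y_j$, $0\le j\le k'$, $j\notin\{k,k'\}$. Bounding the two factors $\Phi(y_k-r_{x_k}(y_{k-1}))$ and $\Phi(y_{k'}-r_{x_{k'}}(y_{k'-1}))$ by $\|\Phi\|_\infty$ pulls $\|\Phi\|_\infty^2$ out of the integral. What is left is $p(Y_0=y_0)$ times the product of the factors $\Phi(y_j-r_{x_j}(y_{j-1}))$ with $j\notin\{k,k'\}$; removing the indices $k$ and $k'$ breaks this into two chains carried by the index blocks $\{1,\dots,k-1\}$ and $\{k+1,\dots,k'-1\}$, and integrating one variable at a time from the free end of each block inwards — $y_{k'-1},y_{k'-2},\dots,y_{k+1}$, then $y_{k-1},\dots,y_1,y_0$ — each integral equals $1$ by translation invariance (the last one using $\int p(Y_0=y_0)\,dy_0=1$). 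Thus the $y$‑integral equals $1$ for every fixed $x_{1:k'}$, and finally $\sum_{x_{1:k'}}\mu_{x_1}\prod_{k=1}^{k'-1}a_{x_k x_{k+1}}=1$, giving $p(Y_k=y_k,Y_{k'}=y_{k'})\le\|\Phi\|_\infty^2$.

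The only points requiring a little care are the change‑of‑variables step with merely continuous regression functions — handled by the triangular‑translation observation above rather than by a Jacobian computation — and the bookkeeping of the integration order in ii), where one must peel variables off the free ends of each broken chain; the degenerate cases $k=1$ (first block empty, only $p(Y_0=y_0)$ survives) and $k'=k+1$ (second block empty) should be checked but are immediate. I do not expect a genuine obstacle: the lemma is essentially the standard state‑space factorisation followed by a one‑line boundedness estimate.
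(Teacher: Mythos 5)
Your proposal is correct and follows essentially the same route as the paper: a triangular change of variables between $(e_{1:n})$ and $(Y_{1:n})$ given the regime path, combined with the mutual independence of $Y_0$, $\{X_k\}$ and $\{e_k\}$ for part i), then marginalisation and bounding the two factors $\Phi(y_k-r_{x_k}(y_{k-1}))$, $\Phi(y_{k'}-r_{x_{k'}}(y_{k'-1}))$ by $\|\Phi\|_\infty$ for part ii). Your replacement of the paper's Jacobian argument by the Fubini/translation-invariance observation is a minor refinement in rigor (the paper invokes a unit triangular Jacobian even though the $r_i$ are only continuous), but it does not change the substance of the proof.
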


For the proof of this lemma we refered to the reader to Appendix \ref{appA}.

\subsection{Strong mixing}

A strictly stationary stochastic process $Y =\{Y_k\}_{k\in\mathbb{Z}}$ is called strongly mixing, if 
\begin{equation}
\alpha_n :=\sup\{|\mathbb{P}(A\cap B)- \mathbb{P}(A)\mathbb{P}(B)|: A\in \mathcal{M}_{-\infty}^0, B\in \mathcal{M}_n^{\infty}\} \to 0,
\end{equation}
as $n\to \infty$, where $\mathcal{M}_a^{b}$, with $a,b \in \overline{\mathbb{Z}} $, is the $\sigma$-algebra generated by $\{Y_k\}_{k= a:b}$, and is absolutely regular mixing, if 
\begin{equation}
\beta_n :=\mathbb{E}\left( ess \sup\{\mathbb{P}(B| \mathcal{M}_{-\infty}^0)-\mathbb{P}(B) :  B\in \mathcal{M}_n^{\infty}\} \right) \to 0,
\end{equation}
as $n\to \infty$.

The values $\alpha_n$ are called strong mixing coefficients, and the values $\beta_n$ are the regular mixing coefficients. For properties and examples under mixing assumptions see Doukhan \cite{dmixing}. 
In general, we have $2\alpha_n \leq \beta_n\leq 1$.

Note that the $\alpha$-mixing coefficients can be rewritten as:
\begin{equation}
\alpha_n :=\sup\{|cov(\phi,\xi)|: 0\leq \phi, \xi \leq 1, \phi \in \mathcal{M}_{-\infty}^0, \xi\in \mathcal{M}_n^{\infty}\}. 
\end{equation} 
In the case of a strictly stationary Markov process $X$, with space state $(E,\mathcal{B})$, kernel probability transition $A$ 
and invariant probability measure $\mu$, the $\beta$-mixing coefficients take the following form (see Doukhan \cite{dmixing}, section 2.4):
\begin{equation}
\beta_n :=\mathbb{E}\left( \sup\{|A^{(n)}(X,B)-\mu(B)|: B\in \mathcal{B}\} \right).
\end{equation}

\begin{prop}
\label{ModeloesMixing}
The MS-NAR model under condition E1 is strictly stationary, is $\alpha$-mixing and their coefficients $\alpha_n(Y)$ decrease 
geometrically.
\end{prop}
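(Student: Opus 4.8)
The plan is to transfer the mixing property from the bivariate Markov chain $Z_k=(Y_k,X_k)$ to its first coordinate $\{Y_k\}$. Although the statement mentions only E1, the existence of a stationary solution and its geometric ergodicity rely on the preceding proposition of Yao and Attali, which I will use. First I would fix the stationary regime: that proposition furnishes a stationary law for the $Y$-component, and initializing it jointly with $X_1\sim\mu$ (which exists by E1) gives an initial law under which $\{Z_k\}_{k\ge 1}$ — already observed to be a Markov chain on $\mathbb{R}\times\{1,\dots,m\}$ — is strictly stationary; in particular $\{Y_k\}$ is strictly stationary.

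Next I would use geometric ergodicity of $\{Z_k\}$, also supplied by the Yao--Attali proposition: there exist $\varrho\in(0,1)$ and a measurable $C\ge 0$, integrable against the joint invariant law $\pi$, such that $\sup_{B}|A^{(n)}(z,B)-\pi(B)|\le C(z)\varrho^n$ for all $n$ and all $z$. The $\pi$-integrability of $C$ is automatic when geometric ergodicity is obtained, as here, from a Foster--Lyapunov drift condition, since the drift function itself can be taken as $C$ and is $\pi$-integrable. Inserting this bound into the expression for the absolute regularity coefficients of a stationary Markov chain recalled above yields
$$
\beta_n(Z)=\mathbb{E}\Big(\sup_{B}|A^{(n)}(Z,B)-\pi(B)|\Big)\le \varrho^n\,\mathbb{E}\big(C(Z)\big)=O(\varrho^n),
$$
so $\{Z_k\}$ is $\beta$-mixing with geometrically decaying coefficients.

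To conclude, from $2\alpha_n\le\beta_n$ we get that $\{Z_k\}$ is $\alpha$-mixing with $\alpha_n(Z)=O(\varrho^n)$; and since $Y_k$ is a measurable (coordinate projection) function of $Z_k$, one has $\mathcal{M}_a^b(Y)\subseteq\mathcal{M}_a^b(Z)$ for every $a\le b$, whence $\alpha_n(Y)\le\alpha_n(Z)$. Therefore $\{Y_k\}$ is $\alpha$-mixing and $\alpha_n(Y)$ decreases geometrically.

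The only delicate point is the passage from geometric ergodicity — a convergence statement from a fixed starting point — to geometric absolute regularity, i.e. verifying that the dominating function $C$ in the ergodicity bound is integrable with respect to $\pi$; this is standard when geometric ergodicity comes from a drift/minorization argument (as in Yao and Attali), but it should be stated explicitly rather than taken for granted. The remaining ingredients — the Markov property of $(Y_k,X_k)$, the Markov-chain expression for $\beta_n$ (see Doukhan, Section~2.4), the inequality $2\alpha_n\le\beta_n$, and monotonicity of the $\alpha$-coefficients under passing to a sub-$\sigma$-field — are routine.
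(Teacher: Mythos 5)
Your proposal is correct in substance, but it takes a genuinely different route from the paper's. You work with the bivariate chain $Z_k=(Y_k,X_k)$: you invoke the Yao--Attali proposition recalled in Section 2 (hence the full set of hypotheses E1--E7, not E1 alone) to obtain a stationary solution and geometric ergodicity of $Z$, convert this into geometric decay of $\beta_n(Z)$ via the Markov-chain formula $\beta_n=\mathbb{E}\bigl(\sup_B|A^{(n)}(Z,B)-\pi(B)|\bigr)$ --- correctly flagging that the $\pi$-integrability of the dominating function comes from the Foster--Lyapunov drift function --- and then project, using $\alpha_n(Y)\le\alpha_n(Z)\le\tfrac12\beta_n(Z)$. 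The paper never uses ergodicity of the joint chain: it conditions on the hidden chain, replacing functionals of the $Y$-blocks by their conditional expectations $h_\phi(X_{1:k})$, $h_\xi(X_{k+s+1:k+s+l})$, which yields the key inequality $\alpha_n(Y)\le\alpha_n(X)$ for the \emph{finite-state} chain $X$, and then concludes with the elementary bound $2\alpha_n(X)\le\beta_n(X)\le m\zeta^n$ for a finite ergodic chain. The paper's route is thus lighter: it nominally needs only E1 (plus, implicitly, stationary initialization of the model) and only finite-chain ergodicity; your route requires the stability conditions E2--E7 and the drift-condition machinery of Yao--Attali, but in exchange it delivers a stronger intermediate conclusion, namely geometric $\beta$-mixing of the pair $(Y_k,X_k)$ itself, without needing the conditional-expectation reduction to the hidden chain. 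Two points to tighten: the stationary initialization must be the \emph{joint} invariant law of $(Y_k,X_k)$ furnished by part (i) of the Yao--Attali proposition; as written, ``the stationary law for the $Y$-component \ldots jointly with $X_1\sim\mu$'' reads like a product measure, and the two coordinates are not independent in the stationary regime. And since the proposition is stated under E1 only, you should state explicitly that your argument proves it under the hypotheses of the stability proposition, which is where the existence of the stationary regime comes from in any case.
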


The proof is postponed to Appendix \ref{appA}.

\subsection{Kernel estimator: fully observed data case}

In this section we assume that a realization of the complete data  $\{Y_k, X_k\}_{k=1:n}$ 
is available. We focus in the uniform convergence over compact sets of the Nadaraya-Watson kernel estimator defined in \eqref{NWEstim}.

For a stationary MS-NAR model, the quantity of interest in the autoregression function estimation is 
$
r(y)=\mathbb{E}(Y_1|Y_0=y),
$
which can be rewritten as
$$
r(y)=\sum_{i=1}^m \mathbb{E}(Y_1|Y_0=y,X_1=i)\mathbb{P}(X_1=i).
$$
Hence, it is sufficient to estimate each autoregression function 
\begin{equation}\label{regfunction}
r_i(y)=\mathbb{E}(Y_1|Y_0=y,X_1=i), 
\end{equation}
for $i=1,\ldots,m$ and $y\in\mathbb{R}$.

Let us denote
\begin{eqnarray}
g_i(y)&:=&r_i(y)f_i(y),\\
f_i(y)&:=&\mu_ip(Y_0=y).
\end{eqnarray}
The Nadaraya-Watson kernel estimator of $r_i$ is  
$$\hat{r}_i(y)=\left\{\begin{array}{cc}
{\hat{g}_i(y)}/{\hat{f}_i(y)} & \mbox{if $\hat{f}_i(y)\not=0$,}\\
0& \mbox{otherwise}
\end{array}\right.
$$ 
with
\begin{eqnarray}
\hat{g}_i(y)&:=&\frac{1}{nh}\sum_{k=0}^{n-1}Y_{k+1}K_h\left({y-Y_k}\right)\car_{i}(X_{k+1}),\\
\hat{f}_i(y)&:=&\frac{1}{nh}\sum_{k=0}^{n-1}K_h\left({y-Y_k}\right)\car_{i}(X_{k+1}),
\end{eqnarray}
and $K_h(y)= K(y/h)$.

In order to obtain the convergence of the ratio estimator $\hat{r}_i=\hat{g}_i(y)/\hat{f}_i(y)$ we apply the method 
used by G. Collomb, see Ferraty {\it et. al} \cite{Ferraty}, which studies simultaneously the convergence of $\hat{g}_i(y)$ and $\hat{f}_i(y)$,
when $n$ tend to $\infty$. But before this, we relate the conditions that will allow us to obtain the asymptotic results.

Let us take a kernel $K:\mathbb{R}\to\mathbb{R}$, positive, symmetric, with compact support such that $\int K(t)dt=1$. 
We assume that the kernel $K$ as well as  the density $\Phi$ are bounded, i.e  
\begin{enumerate}
\item[\bf B1] $\|K\|_{\infty}<\infty$.
\item[\bf B2] $\|\Phi\|_{\infty}<\infty$.
\end{enumerate}
Under condition B1, the kernel $K$ is of order 2, i.e. $\int tK(t)dt=0$ and $0<\left|\int t^{2}K(t)dt\right|<\infty$. 

Let $\mathrm{C}$ be a compact subset of $\mathbb{R}$, we assume the following regularity conditions: 
 
\begin{enumerate}
\item[\bf R1] There exist finite constants $c, \beta>0$, such that
$$
\forall y,y'\in \mathrm{C},\ |K(y)-K(y')|
<c|y-y'|^\beta.
$$
\item[\bf R2] The density $p_0$ of $Y_0$, $\Phi$, and $r_i$ has continuous second derivatives on the interior of $\mathrm{C}$.

\item[\bf R3] For all $k\in\mathbb{N}$, the functions
    $$
    r_{i,k}(t,s)=\mathbb{E}(|Y_1Y_{k+1}|\, |Y_0=t,Y_k=s,X_1=i,X_{k+1}=i)
    $$
    are continuous. 
\end{enumerate}

We define $g_{2,i}(y):=f_i(y)r_{i,0}(y,y)= f_i(y) \mathbb{E}(Y_1^2|Y_0=y,X_1=i)$, which is continous from condition R3.

Let  $\{h_n\}_{n\geq 1}$ be a sequence of real number satisfying the following condition  
\begin{enumerate}
\item[\bf S1] For all $n\geq0$, $h_n>0$, $\lim_{n\to\infty}h_n=0$ and $\lim_{n\to\infty}nh_n=\infty$. %$\lim_{n\to\infty}nh_n/\log n=\infty$. 
\end{enumerate}

Finally, we impose one of the two following moment conditions:
\begin{enumerate}     
\item[\bf M1] $\mathbb{E}(\exp(|Y_0|))<\infty$ and $\mathbb{E}(\exp(|e_1|))<\infty$.
\item[\bf M2] $\mathbb{E}(|Y_0|^s)<\infty$ and $\mathbb{E}(|e_1|^s)<\infty$,  for some $s>2$.
\end{enumerate}

\begin{remark} Note that M1 implies M2, and M2 implies E5, which is a sufficient condition for the stability of MS-NAR model. 

From independence between $Y_0$ and $e_1$, condition M1 implies 
$$
\mathbb{E}(\exp(|Y_1|))\leq c\mathbb{E}(\exp(|Y_0|))\mathbb{E}(\exp(|e_1|)).
$$
Moreover, E3 and M2 imply $\mathbb{E}(|Y_1|^s)< \infty$, this condition is also implied by M1.

Condition M1 is assumed in order to obtain the a.s. uniform convergence over compact sets and M2 for the a.s pointwise convergence.
\end{remark}

%\subsubsection{Convergence over compact sets}
Now, we establish the uniform convergence over compact sets of the Nadaraya-Watson kernel estimator $\hat{r}_i$ defined in \eqref{NWEstim}. For this, we need the following three technical lemmas. Their  respectives  proofs are reported to Appendix \ref{appA}. 

The first lemma allows to treat in a unified way the asymptotic behavior of the variances and covariances of $\hat{f}_i$ and  
a truncated version of $\hat{g}_i$. The others two lemmas given the asymptotic bound for the bias and variance term in the estimation of 
the regression functions $r_i$'s. 

%%%%%%%%%%%%%% Lemma 2 %%%%%%%%%%%%%%%%%
\begin{lema}\label{asympt_cov} Assume the model MS-NAR satisfying conditions E1-E2, E5-E6, D1, B1-B2, R3 and S1. Let 
$$
T_{k,n}=aK_h(y-Y_k)\car_i(X_{k+1})+bY_{k+1}\car_{\{|Y_{k+1}|\leq M_n\}}K_h(y-Y_k)\car_i(X_{k+1}).
$$
Then, the following statements hold:
\begin{itemize}
\item[i)]  $\mathrm{var}(T_{0,n})\approx h(a^2f_i(y)+2abg_i(y)+b^2g_{2,i}(y))\int K^2(z)dz + o(h^2)$.
\item[ii)] $\mathrm{cov}(T_{0,n},T_{k,n})\preceq h^2(a^2c_1+2abc_2(y)+b^2c_3(y))+o(h^3)$.
\end{itemize}
For all $n>0$,
\begin{itemize}
\item[iii)] $\mathrm{cov}(T_{0,n},T_{k,n})\leq (a^2 + 2ab M_n+b^2M_n^2)4\|K\|_{\infty}^2 \alpha_{k-1}$.
\end{itemize}
\end{lema}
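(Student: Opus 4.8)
The plan is to reduce all three statements to elementary moment computations by writing
\[
T_{k,n}=a\,\xi_k+b\,\eta_k,\qquad
\xi_k:=K_h(y-Y_k)\car_i(X_{k+1}),\quad
\eta_k:=Y_{k+1}\car_{\{|Y_{k+1}|\le M_n\}}K_h(y-Y_k)\car_i(X_{k+1}),
\]
so that by bilinearity of the covariance $\mathrm{var}(T_{0,n})=a^2\mathrm{var}(\xi_0)+2ab\,\mathrm{cov}(\xi_0,\eta_0)+b^2\mathrm{var}(\eta_0)$ and $\mathrm{cov}(T_{0,n},T_{k,n})=a^2\mathrm{cov}(\xi_0,\xi_k)+ab[\mathrm{cov}(\xi_0,\eta_k)+\mathrm{cov}(\eta_0,\xi_k)]+b^2\mathrm{cov}(\eta_0,\eta_k)$. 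Everything then comes down to the seven scalar (co)variances, and the computations for $\eta$ are those for $\xi$ with the extra factor $Y_{k+1}\car_{\{|Y_{k+1}|\le M_n\}}$ inserted, which is exactly why the three bilinear forms $(f_i,g_i,g_{2,i})$, $(c_1,c_2(y),c_3(y))$ and $(1,M_n,M_n^2)$ show up.

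For (i), I would use the explicit joint density of Lemma~\ref{exisdensidad}(i): for bounded $\psi$, $\mathbb{E}[\psi(Y_0,Y_1)\car_i(X_1)]=\mu_i\int\!\!\int\psi(u,v)\Phi(v-r_i(u))p_0(u)\,du\,dv$. Applying this with $\psi=K_h^2(y-\cdot)$, $\psi=K_h^2(y-\cdot)\,v^2\car_{\{|v|\le M_n\}}$, $\psi=K_h^2(y-\cdot)\,v\,\car_{\{|v|\le M_n\}}$ (and the corresponding first-moment choices), changing variables $u=y-hz$, Taylor expanding using R2 and continuity of $r_i$, and using $\int K=1$, $\int zK=0$, $\int K^2<\infty$, gives $\mathbb{E}[\xi_0^2]=hf_i(y)\int K^2+o(h)$, $\mathbb{E}[\eta_0^2]=hg_{2,i}(y)\int K^2+o(h)$, $\mathbb{E}[\xi_0\eta_0]=hg_i(y)\int K^2+o(h)$, where the truncation disappears in the limit by dominated convergence, the dominating functions being integrable because of M2 (so that $\mathbb{E}(Y_1^2\car_{\{|Y_1|\le M_n\}}\mid Y_0=y,X_1=i)\to\mathbb{E}(Y_1^2\mid Y_0=y,X_1=i)=g_{2,i}(y)/f_i(y)$). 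Since $\mathbb{E}\xi_0,\mathbb{E}\eta_0$ are $O(h)$, their squares and products fall into the remainder; collecting terms gives the stated leading order $h(a^2f_i(y)+2abg_i(y)+b^2g_{2,i}(y))\int K^2$.

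For (ii), integrating out $y_1,\dots,y_{k-1}$ and summing over $x_2,\dots,x_k$ in Lemma~\ref{exisdensidad}(i) produces the density $p^{(i)}_k(u,v)$ of $(Y_0,Y_k)$ on $\{X_1=i,\ X_{k+1}=i\}$; by Lemma~\ref{exisdensidad}(ii) it satisfies $p^{(i)}_k\le\|\Phi\|_\infty^2$, and under R2 it is $C^2$ (differentiating under the integral sign). Then $\mathbb{E}[\xi_0\xi_k]=\int\!\!\int K_h(y-u)K_h(y-v)p^{(i)}_k(u,v)\,du\,dv=h^2p^{(i)}_k(y,y)+o(h^3)$ after the change of variables (the $h^3$ term vanishing by symmetry of $K$), and since $0\le\mathbb{E}\xi_0\,\mathbb{E}\xi_k=O(h^2)$ one gets $\mathrm{cov}(\xi_0,\xi_k)\le h^2\|\Phi\|_\infty^2+o(h^3)=:h^2c_1+o(h^3)$. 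For the remaining three covariances I would insert the conditional expectations $\mathbb{E}(Y_1^{\alpha}Y_{k+1}^{\beta}\car_{\mathrm{trunc}}\mid Y_0=u,Y_k=v,X_1=X_{k+1}=i)$, which by R3 are dominated by the continuous function $r_{i,k}$ and, uniformly once $M_n\to\infty$, by moments of $Y_1$ finite under M2; the same change of variables then yields the $h^2c_2(y)$ and $h^2c_3(y)$ contributions, and adding the three bilinear pieces gives the claimed bound on $\mathrm{cov}(T_{0,n},T_{k,n})$.

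For (iii) — the crucial step — observe that $T_{0,n}$ is $\sigma(Y_0,Y_1,X_1)$-measurable and $T_{k,n}$ is $\sigma(Y_k,Y_{k+1},X_{k+1})$-measurable, and by B1 together with the truncation $\|T_{0,n}\|_\infty,\|T_{k,n}\|_\infty\le(|a|+|b|M_n)\|K\|_\infty$. The bivariate chain $\{(Y_j,X_j)\}$ is geometrically ergodic (it is the stationary solution of the MS-NAR model provided by the Yao--Attali proposition), hence $\beta$-mixing and a fortiori $\alpha$-mixing with geometrically decreasing coefficients $\alpha_n$ — this is Proposition~\ref{ModeloesMixing}, applied to the joint process, which is needed here precisely because $T_{k,n}$ depends on $\car_i(X_{k+1})$. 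The index blocks of $T_{0,n}$ and $T_{k,n}$ are separated by a gap of $k-1$, so the covariance inequality for $\alpha$-mixing variables (Davydov--Ibragimov; see Doukhan~\cite{dmixing}), namely $|\mathrm{cov}(U,V)|\le 4\|U\|_\infty\|V\|_\infty\,\alpha(\sigma(U),\sigma(V))$, gives $|\mathrm{cov}(T_{0,n},T_{k,n})|\le 4(|a|+|b|M_n)^2\|K\|_\infty^2\alpha_{k-1}=(a^2+2abM_n+b^2M_n^2)\,4\|K\|_\infty^2\,\alpha_{k-1}$. The two points I expect to require care are: in (ii), ensuring $c_2(y),c_3(y)$ are genuinely finite and uniform in $k$ while keeping the truncation bias at the stated order (this is where R3 and M2 carry the load); and in (iii), that one must use $\alpha$-mixing of the full chain $\{(Y_k,X_k)\}$ rather than of $\{Y_k\}$ alone, which is legitimate only because the joint chain is geometrically ergodic. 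The moment computations in (i)--(ii) are otherwise routine once the factorization of Lemma~\ref{exisdensidad} is available.
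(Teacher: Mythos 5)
Your proof follows essentially the same route as the paper: the bilinear decomposition of $T_{k,n}$ into the two kernel terms, the density factorization of Lemma~\ref{exisdensidad} with a change of variables and dominated convergence under R2--R3 for (i)--(ii), and the Rio--Davydov covariance inequality for bounded strongly mixing variables for (iii). Your direct application of that inequality to $T_{0,n},T_{k,n}$ (together with the remark that mixing of the joint chain $\{(Y_k,X_k)\}$, not of $\{Y_k\}$ alone, is what is needed) is a slightly cleaner rendering of the paper's own step, which first pulls the truncated factors $M_n^{s+l}$ out of the covariance before invoking Rio's bound, but the substance is the same.
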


%%%%%%%%%%%%%% Lemma 3 %%%%%%%%%%%%%%%%%
\begin{lema} 
\label{cvarianza}
Assume that the model MS-NAR satisfies conditions E1-E2, E5-E6, D1, B1-B2, R2-R3, S1 and M2. 
Let $\{M_n\}_{n\geq 1}$ be a positive sequence and $\delta > 1$, then the following asymptotic inequalities hold true.
\begin{itemize}
\item[i)]
$\mathbb{P}(|\hat{g}_i(y)-\mathbb{E}\hat{g}_i(y)|\geq \epsilon)\preceq 4\left(1+\frac{\epsilon^2nh}{16 \delta}\right)^{-\delta/2}
+c_1\frac{16M_n\zeta^{u_n}}{\epsilon h} +  
c_2 \frac{M_n^{(2-s)}}{\epsilon^2 h^2}
$,
\item[ii)] 
$\mathbb{P}(|\hat{f}_i(y)-\mathbb{E}\hat{f}_i(y)|\geq \epsilon)\preceq 4\left(1+\frac{\epsilon^2nh}{16 \delta}\right)^{-\delta/2}
+c_3\frac{16\zeta^{u_n}}{\epsilon h}$.
\end{itemize}
\end{lema}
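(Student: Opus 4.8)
The plan is to write each centered statistic — $\hat f_i(y)-\mathbb{E}\hat f_i(y)$ and $\hat g_i(y)-\mathbb{E}\hat g_i(y)$ — as a normalized sum $\frac1n\sum_k\xi_k$ of a strictly stationary, geometrically $\alpha$-mixing sequence (Proposition~\ref{ModeloesMixing}) and to apply a Bernstein-type exponential inequality for $\alpha$-mixing bounded summands — the ``Collomb method'', see \cite{Ferraty} and \cite{dmixing} — whose variance parameter will be controlled by Lemma~\ref{asympt_cov}. For $\hat f_i$ the summands are already bounded by $\|K\|_\infty/h$; for $\hat g_i$, since $Y_{k+1}$ is unbounded, I would first split off the contribution of $\{|Y_{k+1}|>M_n\}$ and estimate it separately by a crude second-moment inequality.

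\emph{Part ii).} I would take $\xi_k=\frac1h K_h(y-Y_k)\car_i(X_{k+1})-\mathbb{E}\big[\frac1h K_h(y-Y_0)\car_i(X_1)\big]$, so that $\hat f_i(y)-\mathbb{E}\hat f_i(y)=\frac1n\sum_{k=0}^{n-1}\xi_k$ with $\|\xi_k\|_\infty\le\|K\|_\infty/h$ and mixing coefficients $\alpha_n\le C\zeta^n$. Lemma~\ref{asympt_cov}(i) in the case $a=1$, $b=0$ gives $\mathrm{var}(\xi_0)=h^{-1}f_i(y)\int K^2+o(1)$, and Lemma~\ref{asympt_cov}(ii)--(iii) give $|\mathrm{cov}(\xi_0,\xi_k)|\le\min\{c,\,4\|K\|_\infty^2 h^{-2}\alpha_{k-1}\}$; since $\alpha_k$ decays geometrically the covariance sum is $O(\log(1/h))$, whence $\mathrm{var}\big(\sum_{k=0}^{n-1}\xi_k\big)\le C\,n/h$ for $n$ large. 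Feeding this sup-norm bound, this variance bound, the block length $u_n$ and $\alpha_{u_n}\le C\zeta^{u_n}$ into the blocking inequality I would obtain a bound of the form $4\exp(-c_0\,\epsilon^2 nh)+c\,\epsilon^{-1}h^{-1}\zeta^{u_n}$ (for $\epsilon$ in a bounded range): the second summand is the announced $c_3\,16\,\zeta^{u_n}/(\epsilon h)$, and the first I would rewrite, via the elementary inequality $e^{-x}\le(1+x/a)^{-a}$ (valid for $x\ge0$, $a>0$) with $a=\delta/2$, as $4(1+\epsilon^2 nh/(16\delta))^{-\delta/2}$, the numerical constant $16$ coming from a generous bound on the variance. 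This proves ii).

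\emph{Part i).} I would split $Y_{k+1}=Y_{k+1}\car_{\{|Y_{k+1}|\le M_n\}}+Y_{k+1}\car_{\{|Y_{k+1}|>M_n\}}$, set $\hat g_i=\tilde g_i+\hat g_i^{(2)}$ accordingly with $\hat g_i^{(2)}(y)=\frac1{nh}\sum_{k=0}^{n-1}V_k$, $V_k=Y_{k+1}\car_{\{|Y_{k+1}|>M_n\}}K_h(y-Y_k)\car_i(X_{k+1})$, and bound $\mathbb{P}(|\hat g_i-\mathbb{E}\hat g_i|\ge\epsilon)\le\mathbb{P}(|\tilde g_i-\mathbb{E}\tilde g_i|\ge\epsilon/2)+\mathbb{P}(|\hat g_i^{(2)}-\mathbb{E}\hat g_i^{(2)}|\ge\epsilon/2)$. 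For the first probability I would repeat the argument of ii) with $\tilde\xi_k=\frac1h Y_{k+1}\car_{\{|Y_{k+1}|\le M_n\}}K_h(y-Y_k)\car_i(X_{k+1})-\mathbb{E}[\cdots]$: now $\|\tilde\xi_k\|_\infty\le\|K\|_\infty M_n/h$, while Lemma~\ref{asympt_cov}(i) in the case $a=0$, $b=1$ gives $\mathrm{var}(\tilde\xi_0)=h^{-1}g_{2,i}(y)\int K^2+o(1)$ — the decisive point being that, since $\mathbb{E}|Y_1|^s<\infty$ (remark after conditions M1--M2), truncation does not change the $1/h$ order of the variance, so the exponential term is again $4(1+\epsilon^2 nh/(16\delta))^{-\delta/2}$ while $M_n$ enters only through the sup-norm bound, producing $c_1\,16\,M_n\zeta^{u_n}/(\epsilon h)$. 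For the second probability I would use Chebyshev together with $\mathrm{var}(\cdot)\le\mathbb{E}[(\cdot)^2]$, getting $\mathbb{P}(|\hat g_i^{(2)}-\mathbb{E}\hat g_i^{(2)}|\ge\epsilon/2)\le 4\epsilon^{-2}\mathbb{E}\big[(\hat g_i^{(2)})^2\big]$, then Cauchy--Schwarz and stationarity to get $\mathbb{E}\big[\big(\sum_k V_k\big)^2\big]\le n^2\mathbb{E}[V_0^2]$, and finally $\mathbb{E}[V_0^2]\le\|K\|_\infty^2\,\mathbb{E}\big[Y_1^2\car_{\{|Y_1|>M_n\}}\big]\le\|K\|_\infty^2\,\mathbb{E}|Y_1|^s\,M_n^{2-s}$ (using $s>2$), hence $\mathbb{P}(|\hat g_i^{(2)}-\mathbb{E}\hat g_i^{(2)}|\ge\epsilon/2)\le c_2\,M_n^{2-s}/(\epsilon^2 h^2)$. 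Adding the two estimates proves i).

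\emph{Main obstacle.} I expect the delicate step to be the application of the mixing exponential inequality to the bounded sums, and in particular the verification that the variance parameter governing its exponent is genuinely of order $1/(nh)$: this is exactly what Lemma~\ref{asympt_cov} is for, with the geometric decay of $\alpha_n$ (Proposition~\ref{ModeloesMixing}) ensuring that the covariance sum contributes only a logarithmic — hence negligible — term. One must also keep careful track of how the sup-norm bound ($\|K\|_\infty/h$ for $\hat f_i$, $\|K\|_\infty M_n/h$ for $\hat g_i$) and the free block length $u_n$ propagate into the remainder so that it acquires the announced shape $\zeta^{u_n}/(\epsilon h)$, respectively $M_n\zeta^{u_n}/(\epsilon h)$. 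The truncation split, the tail estimate and the exponential-to-polynomial passage are all routine.
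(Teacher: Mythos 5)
Your overall architecture coincides with the paper's: the same truncation of $Y_{k+1}$ at level $M_n$, the same Chebyshev/second-moment treatment of the tail part yielding $c_2M_n^{2-s}/(\epsilon^2h^2)$, and the same use of Lemma~\ref{asympt_cov} together with the geometric decay of the mixing coefficients (splitting the covariance sum at lag $u_n$, i.e.\ Tran's device) to show that the variance of the truncated sum is of order $nh$. The point where you diverge is the concentration inequality, and that is where there is a genuine gap. The paper applies Rio's Fuk--Nagaev inequality for strongly mixing sequences (\cite{Rio1}, Theorem~6.2), whose first term $4\left(1+\lambda^2/(\delta s_n^2)\right)^{-\delta/2}$ depends only on the variance scale $s_n^2$ and the free parameter $\delta$, all the cost of the unbounded/truncated summands being pushed into the second term $4nM_n\alpha_{u_n}\lambda^{-1}$; substituting $4\lambda=\epsilon nh$ and $s_n^2=O(nh)$ gives exactly the stated bound, with a first term free of $M_n$.

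A Bernstein-type blocking inequality for bounded $\alpha$-mixing summands cannot deliver that form: the sup norm of a block, of order $u_nM_n\|K\|_{\infty}/h$ for part~i) (and $u_n\|K\|_{\infty}/h$ for part~ii)), necessarily enters the denominator of the exponent, so what you would actually obtain is of the shape $\exp\bigl(-c\,\epsilon^2nh/(1+c'M_nu_n\epsilon)\bigr)$ rather than $\exp(-c_0\epsilon^2nh)$. Your claim that ``$M_n$ enters only through the sup-norm bound, producing $c_1\,16M_n\zeta^{u_n}/(\epsilon h)$'' is therefore not justified, and the discrepancy is not harmless: the paper later invokes part~i) with $M_n=n^{\gamma}$, $u_n=(h\log n)^{-1}$ and $\epsilon\asymp\sqrt{\log n/(nh)}$, a regime in which $M_nu_n\epsilon\to\infty$, so your exponent degrades and no longer yields the stated $M_n$-free term $4\left(1+\epsilon^2nh/(16\delta)\right)^{-\delta/2}$. (Your conversion $e^{-x}\le(1+x/a)^{-a}$ is correct but cosmetic; the problem lies upstream, in the form of the exponential bound itself.) To close the gap, replace the Bernstein step by a Fuk--Nagaev-type inequality as in the paper; the remainder of your argument --- the tail estimate, the variance and covariance bounds, and the final substitution $\lambda=\epsilon nh/4$ --- then goes through essentially as written.
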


%%%%%%%%%%%%%% Lemma 4 %%%%%%%%%%%%%%%%%
\begin{lema}
\label{lsesgo}
Assume that the model MS-NAR satisfies conditions E1, E6, D1, B1 and R2. 
Then the following statements hold true.
\begin{itemize}
\item[i)] $\sup_{y\in\mathrm{C}}|\mathbb{E}\hat{g}_i(y)-g_i(y)|=O(h^2)$.
\item[ii)] $\sup_{y\in\mathrm{C}}|\mathbb{E}\hat{f}_i(y)-f_i(y)|=O(h^2).$
\end{itemize}
\end{lema}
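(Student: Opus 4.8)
The plan is to use stationarity to collapse each bias into a single kernel-smoothing expectation, to peel off the regime weight $\mu_i$ by the independence built into the model, and then to run the classical second-order Taylor estimate for Nadaraya--Watson type statistics.

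\textbf{Step 1 (reduction to one term).} By strict stationarity (Proposition \ref{ModeloesMixing}, which only uses E1) each summand $(Y_k,Y_{k+1},X_{k+1})$ has the law of $(Y_0,Y_1,X_1)$, so
$$
\mathbb{E}\hat f_i(y)=\tfrac1h\mathbb{E}\!\left[K_h(y-Y_0)\car_i(X_1)\right],\qquad
\mathbb{E}\hat g_i(y)=\tfrac1h\mathbb{E}\!\left[Y_1K_h(y-Y_0)\car_i(X_1)\right].
$$
Since $Y_0$, $\{X_k\}_{k\ge1}$ and $\{e_k\}_{k\ge1}$ are mutually independent, $\car_i(X_1)$ is independent of $Y_0$ and $\mathbb{P}(X_1=i)=\mu_i$, whence $\mathbb{E}\hat f_i(y)=\frac{\mu_i}{h}\mathbb{E}[K_h(y-Y_0)]$. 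For $\hat g_i$ I would condition on $(Y_0,X_1)$: on $\{X_1=i\}$ the definition \eqref{regfunction} gives $\mathbb{E}[Y_1\mid Y_0,X_1]=r_i(Y_0)$, so $Y_1\car_i(X_1)$ may be replaced by $r_i(Y_0)\car_i(X_1)$ under the expectation, and independence again yields $\mathbb{E}\hat g_i(y)=\frac{\mu_i}{h}\mathbb{E}[r_i(Y_0)K_h(y-Y_0)]$. Writing $p_0$ for the density of $Y_0$ (it exists by D1) and changing variables $z=(y-u)/h$,
$$
\mathbb{E}\hat f_i(y)=\mu_i\!\int\! K(z)\,p_0(y-hz)\,dz,\qquad
\mathbb{E}\hat g_i(y)=\mu_i\!\int\! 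K(z)\,(r_ip_0)(y-hz)\,dz,
$$
to be compared with $f_i(y)=\mu_i p_0(y)$ and $g_i(y)=r_i(y)f_i(y)=\mu_i(r_ip_0)(y)$.

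\textbf{Step 2 (Taylor expansion).} For (ii), expand $p_0(y-hz)=p_0(y)-hz\,p_0'(y)+\tfrac12h^2z^2p_0''(\xi_{y,z})$ with $\xi_{y,z}$ between $y$ and $y-hz$. Using $\int K=1$ (given), the symmetry of $K$ (so $\int zK(z)\,dz=0$) and the fact that under B1 the kernel is of order $2$ (so $\int z^2K(z)\,dz<\infty$ and $K$ has compact support), the constant term reproduces $\mu_ip_0(y)=f_i(y)$, the linear term vanishes, and the error is $\tfrac12h^2\mu_i\int z^2K(z)p_0''(\xi_{y,z})\,dz$. Statement (i) is the same computation with $p_0$ replaced by $\psi:=r_ip_0$, which is $C^2$ by R2 (product of two functions with continuous second derivatives).

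\textbf{Step 3 (uniformity) and the main point.} The computation above is the routine kernel-bias argument; the only model-specific input is the $\mu_i$-factorization, which uses nothing beyond the mutual independence assumption and the definition \eqref{regfunction} of $r_i$. The one place that needs a little care is making the $O(h^2)$ bound uniform over $\mathrm{C}$ despite R2 being stated only on $\mathrm{int}(\mathrm{C})$: I would fix a compact $\mathrm{C}'$ with $\mathrm{C}\subset\mathrm{int}(\mathrm{C}')$, note that for $h$ small enough and $z$ in the bounded support of $K$ one has $y-hz\in\mathrm{C}'$ for every $y\in\mathrm{C}$, and bound the remainders by $\tfrac12h^2\mu_i\big(\sup_{\mathrm{C}'}|p_0''|\big)\int z^2K(z)\,dz$ and $\tfrac12h^2\mu_i\big(\sup_{\mathrm{C}'}|\psi''|\big)\int z^2K(z)\,dz$, both finite and independent of $y$. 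This yields $\sup_{y\in\mathrm{C}}|\mathbb{E}\hat f_i(y)-f_i(y)|=O(h^2)$ and $\sup_{y\in\mathrm{C}}|\mathbb{E}\hat g_i(y)-g_i(y)|=O(h^2)$, as claimed.
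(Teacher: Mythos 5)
Your proof is correct and follows essentially the same route as the paper's: stationarity plus the factorization $p(Y_0=u,\,X_1=i)=\mu_i\,p_0(u)$ (together with \eqref{regfunction}) reduces each bias to $\int K(z)\,g_i(y-hz)\,dz-g_i(y)$, resp.\ the analogous expression for $f_i$, and then a second-order Taylor expansion with the symmetric order-2 kernel and the continuity of the second derivatives from R2 gives the uniform $O(h^2)$ bound. Your handling of uniformity by enlarging $\mathrm{C}$ to a compact $\mathrm{C}'$ is only a minor (and slightly more careful) variant of the step the paper settles by invoking uniform convergence of $g_i''(\tilde{y}_u)$ on $\mathrm{C}$.
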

%%%%%%%%%%%%%%%%%%%%%%%%%%%%%%%%%%%%%%%%%%
  
\begin{remark}
Lemma  \ref{asympt_cov} is a preliminary result  necessary in order to prove the Lemma \ref{cvarianza}.
\end{remark}

\begin{teor}
\label{conuniforme}
Assume that the model MS-NAR \eqref{modelo} satisfies conditions E1-E4, E6-E7, D1, B1-B2, R1-R3 and S1.
%   D1 D2 D3 D4      D5 D6-D7 D8
%->    D1 E6 E7 (B1) B2 R2    R3
%
%   E1 E2 E3 E4
%-> E1 E3 E2 E4
%
%   K1 K2 K3 K4 and PS
%->          R1     S1 
Then,
\begin{enumerate}
\item[i)] If $nh/\log n \to \infty$ and condition M2 holds,
$$
|\hat{r}_i(y)-r_i(y)|\to 0\quad \mbox{a.s.}
$$
\item[ii)] If $nh/\log n \to \infty$ and condition M1 holds,
$$
\sup_{y\in\mathrm{C}}|\hat{r}_i(y)-r_i(y)|\to 0\quad \mbox{a.s.}
$$
\end{enumerate}
\end{teor}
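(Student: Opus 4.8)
The plan is to follow the Collomb device already announced before the statement: write $\hat r_i-r_i$ as a ratio whose numerator splits into the deviations of $\hat g_i$ and $\hat f_i$ from their targets, and control numerator and denominator separately. Using the identity $g_i=r_if_i$, on the event $\{\hat f_i(y)\neq 0\}$ one has
\[
\hat r_i(y)-r_i(y)=\frac{\bigl(\hat g_i(y)-g_i(y)\bigr)-r_i(y)\bigl(\hat f_i(y)-f_i(y)\bigr)}{\hat f_i(y)} .
\]
Hence it suffices to prove: (a) $\inf_{y\in\mathrm C}\hat f_i(y)$ is eventually bounded away from $0$ a.s.; (b) $\sup_{y\in\mathrm C}|\hat g_i(y)-g_i(y)|\to0$ and $\sup_{y\in\mathrm C}|\hat f_i(y)-f_i(y)|\to0$ a.s. (for the pointwise statement i), the same with $\mathrm C$ replaced by the fixed point $y$); and to recall that $r_i$ is bounded on $\mathrm C$ by E2/R2. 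For (a): $\mu_i>0$ by E1, and the stationary density $p_0$ is strictly positive on $\mathrm C$ — which follows from the factorization in Lemma~\ref{exisdensidad}(i) together with E6--E7 — so $\inf_{\mathrm C}f_i>0$; once the uniform convergence $\hat f_i\to f_i$ in (b) is available, $\hat f_i(y)\ge\tfrac12\inf_{\mathrm C}f_i>0$ for every $y\in\mathrm C$ and all large $n$, a.s., which in particular makes the ``otherwise $0$'' branch in the definition of $\hat r_i$ irrelevant eventually.

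The substance is (b). I would split each estimator into its bias and its fluctuation. The bias is handled directly by Lemma~\ref{lsesgo}: $\sup_{\mathrm C}|\mathbb E\hat g_i-g_i|=O(h^2)\to0$ and likewise for $\hat f_i$. For the fluctuation $\hat g_i-\mathbb E\hat g_i$ I would first replace $Y_{k+1}$ by the truncation $Y_{k+1}\car_{\{|Y_{k+1}|\le M_n\}}$ at a level $M_n\uparrow\infty$ to be chosen; the discarded part vanishes identically on the event $\{\max_{k\le n}|Y_{k+1}|\le M_n\}$, and $\sum_n\mathbb P(\max_{k\le n}|Y_{k+1}|>M_n)\le\sum_n n\,\mathbb P(|Y_1|>M_n)<\infty$ by the moment bound on $Y_1$ implied by M2 (resp.\ by the exponential moment under M1), so by Borel--Cantelli this remainder is a.s.\ eventually zero. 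For the truncated centred quantity I would invoke Lemma~\ref{cvarianza}(i): for the pointwise statement, fix $\delta>2$, take $M_n$ a suitable power of $n$ (large enough, using $s>2$, to kill $M_n^{2-s}/(\epsilon^2h^2)$) and block length $u_n\asymp\log n$ (large enough to kill $M_n\zeta^{u_n}/(\epsilon h)$); since $nh/\log n\to\infty$ forces the leading term $4(1+\epsilon^2nh/16\delta)^{-\delta/2}$ to be $\lesssim(\log n)^{-\delta/2}$, the whole bound is summable in $n$, and Borel--Cantelli gives $\hat g_i(y)-\mathbb E\hat g_i(y)\to0$ a.s.; the same argument with Lemma~\ref{cvarianza}(ii) handles $\hat f_i$, which proves i). For the uniform statement ii) I would add a chaining step: cover $\mathrm C$ by $N_n$ intervals of length $|\mathrm C|/N_n$, use the Hölder regularity R1 of $K$ with S1 to bound the oscillation of the truncated centred process on each interval by a deterministic null sequence once $N_n$ is a sufficiently large power of $n$, and then apply Lemma~\ref{cvarianza} at the $N_n$ centres with a union bound. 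Absorbing the extra polynomial factor $N_n$ in the Borel--Cantelli sum while keeping it summable is precisely what fails under the merely polynomial moments M2 and is rescued by the exponential moments M1 (which permit $M_n$ to grow only logarithmically): this is the reason for the split between i) and ii).

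Assembling the pieces, on $\mathrm C$ and for all large $n$,
\[
\sup_{y\in\mathrm C}|\hat r_i(y)-r_i(y)|\ \le\ \frac{2}{\inf_{\mathrm C}f_i}\Bigl(\sup_{y\in\mathrm C}|\hat g_i(y)-g_i(y)|+\sup_{y\in\mathrm C}|r_i(y)|\cdot\sup_{y\in\mathrm C}|\hat f_i(y)-f_i(y)|\Bigr)\longrightarrow 0\quad\text{a.s.}
\]
and similarly for the pointwise version. The main obstacle is the a.s.\ control of the fluctuation term $\sup_{\mathrm C}|\hat g_i-\mathbb E\hat g_i|$: it demands combining the truncation, the Fuk--Nagaev/Bernstein-type block inequality of Lemma~\ref{cvarianza} (itself resting on the variance/covariance estimates of Lemma~\ref{asympt_cov} and on the geometric $\alpha$-mixing of Proposition~\ref{ModeloesMixing}), and the kernel-regularity chaining, and then calibrating the four free parameters — truncation level $M_n$, block length $u_n$, the exponent $\delta$, and the grid size $N_n$ — so that the resulting probability bound is summable; this is exactly where the bandwidth condition $nh/\log n\to\infty$ and the precise distinction between M1 and M2 are consumed.
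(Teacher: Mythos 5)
Your overall architecture is the same as the paper's: the Collomb-type ratio decomposition, bias controlled by Lemma \ref{lsesgo}, fluctuations controlled by truncation plus the Fuk--Nagaev bound of Lemma \ref{cvarianza} and Borel--Cantelli, a lower bound $\inf_{\mathrm C}\hat f_i\ge\tfrac12\inf_{\mathrm C}f_i$, and a chaining/union-bound step under M1 with $M_n\asymp\log n$ for the uniform statement. However, there is a genuine gap in the one place where the whole argument is decided, namely the calibration that makes the probability bound summable. You ``fix $\delta>2$'' and keep the deviation level $\epsilon$ fixed, and then claim that the leading Fuk--Nagaev term satisfies
\[
4\left(1+\frac{\epsilon^2 nh}{16\delta}\right)^{-\delta/2}\ \lesssim\ (\log n)^{-\delta/2},
\]
``so the whole bound is summable.'' It is not: $\sum_n(\log n)^{-p}$ diverges for every $p$, and under the sole hypothesis $nh/\log n\to\infty$ the quantity $nh$ may grow as slowly as $\log n$ times an arbitrarily slowly diverging factor, so with fixed $\epsilon$ and fixed $\delta$ the first term is only $O\bigl((nh)^{-\delta/2}\bigr)$, which need not be summable. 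Consequently the Borel--Cantelli step, as you have written it, fails for both i) and ii), and the same defect propagates to the uniform part, where the extra factor $\nu_n$ from the union bound makes matters strictly worse.

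The missing idea is exactly the paper's parameter choice: apply Lemma \ref{cvarianza} at the shrinking level $\epsilon=\epsilon_0\sqrt{\log n/(nh)}$ (which still suffices, since one only needs the fluctuation to vanish) and let the Fuk--Nagaev exponent $\delta$ be large relative to $\log n$, so that $\bigl(1+\epsilon_0^2\log n/(16\delta)\bigr)^{-\delta/2}$ behaves like $\exp(-\epsilon_0^2\log n/32)=n^{-\epsilon_0^2/32}$; then $\epsilon_0$ is tuned (in the paper, $\epsilon_0^2/32=(s-2)\gamma-d-1$ with $\vartheta=(s-2)\gamma-d-2>0$, which is where $s>2$, i.e.\ M2, is consumed) so that the total bound is $O(n^{-(1+\vartheta)})$, and in the uniform case $\epsilon_0$ and $M_0$ are enlarged further to absorb the grid cardinality $\nu_n$ and the truncation-remainder term $n^{1-M_0}$. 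With that repair — and with your block length taken as $u_n=(h\log n)^{-1}$ as in the lemma's proof rather than $u_n\asymp\log n$, so that the variance bound $s_n^2=O(nh)$ underlying Lemma \ref{cvarianza} remains valid — your argument coincides with the paper's proof; as it stands, the summability claim at its core is false.
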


The proof of theorem is reported to Appendix \ref{appA}. 

%%%%%%%%%%%%%%%%%%%%%%%%%%%%%%%%%%%%%%%%%%%%%%%%%%%%%%%%%%%%%%%%%%

\section{Main results}
In this section we present our Robbins-Monro type algorithm for the nonparametric estimation of MS-NAR model in the partial observed data case, and we prove the consistency of the estimator.

The Nadaraya-Watson estimator $\hat{r}(y)=(\hat{r}_1(y),\ldots, \hat{r}_m(y) )$, for each $y$, can be interpreted as the
solution of a local weighted least-squares problem, in our case this consists to find the minimum of the potential $U$ defined by 
\begin{equation}
\label{potencialU}
U(y,Y_{1:n},X_{1:n},\theta)=\sum_{k=1}^n\sum_{i=1}^{m}K_h(y-Y_k)\car_i(X_{k+1})(Y_{k+1}-\theta_i)^2,
\end{equation}
with respect to $\theta=(\theta_1,\ldots,\theta_m)$ in a convex open set $\Theta$ of $\mathbb{R}^{m}$. 
Thus, the regression estimator $\hat{r}$ is given by 
$$
\hat{r}(y)=\underset{\theta\in\Theta \subset \mathbb{R}^m}{\mbox{argmin}}\;\;
U(y,Y_{1:n},X_{1:n},\theta).
$$

In the partial observed data case, this is when we not observe $\{X_k\}_{k\geq 1}$, we cannot obtain an explicit expresion for the
solution $\hat{r}(y)$. Then, we must consider  
a recursive algorithm for the aproximation of this solution. Our approach approximates the estimator $\hat{r}(y)$ by a
stochastic recursive algorithm similar to that of Robbins–Monro,
\cite{cappe-moulines-ryden,duflo,YaoRe}.
This involves two steps: first a Monte-Carlo step which restores the
missing data $\{X_n\}_{n\geq 1}$, and a second step where we consider a Robbins-Monro's approximation in order to minimize the potential $U$.

Here are some further notations that we will take into account.
\begin{itemize}
\item For each $1\leq i\leq m$, $n_i=\sum_{k=1}^n\car_{i}(X_k)$ is the number of visits of the Markov chain $\{X_k\}_{\geq1}$ 
to state $i$ in the first $n$ steps, and $n_{ij}(X_{1:n})=\sum_{k=1}^{n-1}\car_{i,j}(X_{k-1},X_k)$ is the number of transitions from $i$ 
to $j$ in the first $n$ steps.
\vspace{0.2cm}
\item $\psi^{t}=(\theta^t,A^t)$ is a vector containing the estimated functions $\theta^t=(\theta_1^t,\ldots, \theta_m^t)$ and the estimated probability transition matrix $A^t$, in the t-th iteration of the Robbins-Monro algorithm.
\end{itemize}

\subsection{Restoration-estimation Robbins-Monro algorithm}
\begin{description}
 \item[Step 0.] Pick an arbitrary initial realization  $X_{1:n}^{0}=X_1^{0},\ldots,X_n^{0}$. Compute the estimated regresion functions $\hat{r}^{0}(y)=(\hat{r}_1^{0}(y),\ldots,\hat{r}_m^{0}(y))$ from equation \eqref{NWEstim} in term of the observed data $Y_{1:n}$ and the initial realization $X_{1:n}^{0}$, and compute the estimated transition matrix  $A^{0}$, by  $A_{ij}^{0}=n_{ij}(X_{1:n}^{0})/n$ for $i,j=1,\ldots,m$.
 Define $\theta^{0}=\hat{r}^{0}(y)$.
\end{description}
For $t\geq1$,
\begin{description}
\item[Step R.] Restore the corresponding unobserved 
data by drawing an sample $X_{1:n}^{t}$ from the conditional
distribution $p(X_{1:n}|Y_{0:n},\psi^{t-1})$.
\item[Step E.]  Update the estimation $\psi^{t}=(\theta^t,A^t)$ by
\begin{equation}
\label{rbnopar}
\theta^{t}=\theta^{t-1}-\gamma_t\nabla_\theta U\left(y,Y_{1:n},X_{1:n}^{t},\theta^{t-1}\right),
\end{equation}
where 
$\nabla_\theta U\left(y,Y_{1:n},X_{1:n}^{t},\theta^{t-1}\right)=\nabla_\theta U\left(y,Y_{1:n},X_{1:n}^{t},\theta\right)
\bigm|_{\theta=\theta^{t-1}}$, and $A^{t}=n_{ij}(X_{1:n}^{t})/n$.

\item[Step A.] Reduce the asymptotic variance of the algorithm by
using the averages $\bar{\theta}^{t}1/t\sum_{k=1}^t\theta^k$ instead of $\theta^{t}$, which can be
recursively computed by $\bar{\theta}^{0}=\theta^0$, and 
\begin{equation}
\label{promedio}
\bar{\theta}^{t}=\bar{\theta}^{t-1}+\frac{1}{t}\left(\theta^{t}-\bar{\theta}^{t-1}\right).
 \end{equation}
\end{description}

The following result enables us to write the algorithm 
as a stochastic gradient algorithm. Let,
$
\mathbb{E}_{\theta'}\left(U(y,Y_{1:n},X_{1:n}^{t},\theta)|\mathcal{F}_{t-1} \right)=u(y,Y_{1:n},\theta)
$
with  ${E}_{\theta'}(\cdot)={E}(\ |{\theta'})$ and $\mathcal{F}_{t-1}$ the $\sigma$-algebra generated by $\{X_{1:n}^s\}_{s=1:(t-1)}$. The proof is given in Appendix \ref{appA}. 

%%%%%%%%%%%%%% Lemma 4.1 %%%%%%%%%%%%%%%%%
\begin{lema}\label{lema_GradEst}
For each $\theta\in\Theta$ we have,
\begin{equation}
\label{constrastelimite}
u(y,Y_{1:n},\theta)=\frac{1}{nh}\sum_{k=1}^n\sum_{i=1}^{m}K_h(y-Y_k)\mathbb{P}(X_{k+1}=i|Y_{0:n},\theta')(Y_{k+1}-\theta_i)^2
 \end{equation}
and
$
\mathbb{E}_{\theta'}\left(\nabla_\theta U(y,Y_{1:n},X_{1:n}^{t},\theta)|\mathcal{F}_{t-1}\right)=\nabla_\theta u(y,Y_{1:n},\theta).
$
\end{lema}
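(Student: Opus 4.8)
The plan is to fix the realization of the observed data $Y_{0:n}$ (the whole algorithm, and hence the lemma, is stated conditionally on it) and to exploit the fact that $\psi^{t-1}=(\theta^{t-1},A^{t-1})$ is $\mathcal{F}_{t-1}$-measurable, hence constant given $\mathcal{F}_{t-1}$: indeed $A^{t-1}$ is built from the empirical transition frequencies of $X_{1:n}^{t-1}$ via $A^{t-1}_{ij}=n_{ij}(X_{1:n}^{t-1})/n$, and $\theta^{t-1}$ is a measurable function of the (fixed) $\theta^{0}$ and of $X_{1:n}^{1},\ldots,X_{1:n}^{t-1}$ through the recursion \eqref{rbnopar}. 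Consequently, since Step R draws $X_{1:n}^{t}$ from $p(X_{1:n}\mid Y_{0:n},\psi^{t-1})$ using fresh randomness, the conditional law of $X_{1:n}^{t}$ given $\mathcal{F}_{t-1}$ is exactly $p(X_{1:n}\mid Y_{0:n},\psi^{t-1})$, and for every function $\varphi$ on the finite set $\{1,\ldots,m\}^{n}$ one has $\mathbb{E}_{\theta'}(\varphi(X_{1:n}^{t})\mid\mathcal{F}_{t-1})=\sum_{x_{1:n}} p(x_{1:n}\mid Y_{0:n},\theta')\,\varphi(x_{1:n})$, where, as in the statement, $\theta'$ abbreviates $\psi^{t-1}$.

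First I would prove the identity for $U$ itself. In $U(y,Y_{1:n},X_{1:n}^{t},\theta)$ the quantities $y$, $h$, $Y_{1:n}$ and $\theta$ are deterministic given $\mathcal{F}_{t-1}$, so the only random factors are the indicators $\car_{i}(X_{k+1}^{t})$; by linearity of conditional expectation the claim reduces to computing $\mathbb{E}_{\theta'}(\car_{i}(X_{k+1}^{t})\mid\mathcal{F}_{t-1})$. Taking $\varphi(x_{1:n})=\car_{i}(x_{k+1})$ above and summing out all coordinates but the $(k+1)$-th identifies this with the single-site smoothing probability $\mathbb{P}(X_{k+1}=i\mid Y_{0:n},\theta')$, and substituting it back yields precisely \eqref{constrastelimite} (with $U$ carrying, as there, the normalisation $1/(nh)$). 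The exchange of the finite sums with the conditional expectation is harmless because $K$ is bounded by B1, $X$ takes finitely many values, and $Y_{k+1}$ has a finite second moment under M2, so every summand is integrable.

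For the gradient identity I would simply differentiate the first one in $\theta$. Since $X_{1:n}^{t}$ ranges over the finite set $\{1,\ldots,m\}^{n}$, the conditional expectation $\mathbb{E}_{\theta'}(U(y,Y_{1:n},X_{1:n}^{t},\theta)\mid\mathcal{F}_{t-1})$ is a finite convex combination of the maps $\theta\mapsto U(y,Y_{1:n},x_{1:n},\theta)$, each a quadratic polynomial in $\theta\in\Theta\subset\mathbb{R}^{m}$; hence $\nabla_{\theta}$ commutes with the finite sum over $x_{1:n}$ (and with the sums over $k$ and $i$), giving $\mathbb{E}_{\theta'}(\nabla_{\theta}U(y,Y_{1:n},X_{1:n}^{t},\theta)\mid\mathcal{F}_{t-1})=\nabla_{\theta}\,\mathbb{E}_{\theta'}(U(y,Y_{1:n},X_{1:n}^{t},\theta)\mid\mathcal{F}_{t-1})=\nabla_{\theta}u(y,Y_{1:n},\theta)$.

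I expect the only delicate point to be the passage $\mathbb{E}_{\theta'}(\car_{i}(X_{k+1}^{t})\mid\mathcal{F}_{t-1})=\mathbb{P}(X_{k+1}=i\mid Y_{0:n},\theta')$, which must be argued carefully from three facts: the Monte-Carlo draw in Step R uses fresh randomness conditionally independent of the past iterates given $\psi^{t-1}$; $\psi^{t-1}$ is $\mathcal{F}_{t-1}$-measurable; and the whole procedure is run conditionally on the observed $Y_{0:n}$, so that $\mathbb{P}(X_{k+1}=i\mid Y_{0:n},\theta')$ is, for each value of $\theta'$, a deterministic number on which the recursion may legitimately act. Once that is settled, marginalizing to a single coordinate and exchanging the gradient with finite sums are routine, and the rest is bookkeeping.
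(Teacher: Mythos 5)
Your proposal is correct and follows essentially the same route as the paper: the identity \eqref{constrastelimite} comes from taking the conditional expectation of the indicators $\car_i(X_{k+1}^{t})$, which marginalize to the smoothing probabilities $\mathbb{P}(X_{k+1}=i\mid Y_{0:n},\theta')$, and the gradient identity comes from exchanging $\nabla_\theta$ with the expectation over $\{1,\ldots,m\}^n$. The only cosmetic difference is that you justify this exchange by finiteness of the state space (a finite convex combination of quadratics in $\theta$), whereas the paper invokes integrability of $U$ with respect to the counting measure and dominated convergence; your version is, if anything, more elementary and also spells out the measurability of $\psi^{t-1}$ with respect to $\mathcal{F}_{t-1}$, which the paper leaves implicit.
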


Therefore, the Restauration-Estimation  algorithm is a stochastic gradient algorithm
that minimizes $u(y,Y_{1:n},\theta)$ and it can be written as
\begin{equation}
\label{GradienteEst}
\theta^{t}=\theta^{t-1}+\gamma_t\left(-\nabla_\theta u(y,Y_{1:n},\theta^{t-1})+\varsigma_t\right),
\end{equation}
where
\begin{eqnarray*}
\varsigma_t
&=&-\nabla_\theta U\left(y,Y_{1:n},X_{1:n}^{t},\theta^{t-1}\right)+
\mathbb{E}_{\theta'}\left(\nabla_\theta U\left(y,Y_{1:n},X_{1:n}^{t},\theta^{t-1}\right)|\mathcal{F}_{t-1}\right)\\
&=&-\nabla_\theta U\left(y,Y_{1:n},X_{1:n}^{t},\theta^{t-1}\right)+\nabla_\theta u(y,Y_{1:n},\theta^{t-1}). 
\end{eqnarray*}

So, the stochastic gradient algorithm is obtained by perturbation of the following gradient system
$$
\dot{\theta}=-\nabla_\theta u(y,Y_{1:n},\theta).
$$

In the following we describe in detail each step of the algorithm.

\subsubsection*{Step 0: SAEM algorithm}
\label{paso0}

We use a Stochastic Approximation version of EM algorithm, proposed by Delyon et al. \cite{Lavielle}, 
in order to  maximize the likelihood of the data. Assume 
that the regression functions are linear and the noise is gaussian. This algorithm proved to be more
computationally efficient than a classical Monte Carlo EM algorithm due to the recycling of simulations from one iteration to the next 
in the smoothing phase of the algorithm.
The used SAEM algorithm is detailed in Section 11.1.6 of \cite{cappe-moulines-ryden}.

\subsubsection*{Step R: Carter and Kohn filter}
\label{pasoES}
The R step of the algorithm
corresponds to conditional simulation given $\psi^{t-1}$.
We describe the sampling method for the conditional distribution
\begin{eqnarray*}
\lefteqn{ p(X_{1:n}=x_{1:n}|Y_{0:n}, \psi^{t-1})}\\
\nonumber &=&\frac{\lambda_{x_1}p(Y_1|Y_0,X_1=x_1,\psi^{t-1})
    \ldots
    a_{x_{n-1}x_{n}}p(Y_n|Y_{n-1},X_n=x_n,\psi^{t-1})}{p(Y_{1:n}|Y_0,\psi^{t-1})},
\end{eqnarray*}
for all $x_{1:n}\in\{1,\ldots,m\}^N$.

Carter and Kohn \cite{Carterkohn} obtained samples $X_{1:n}$ following a stochastic version of the
hidden Markov model forward-backward algorithm first proposed by Baum {{\it et al}.} \cite{Baum70}. 
This follows by noting that 
$p(X_{1:n}|Y_{0:n},\psi^{t-1})$ can be decomposed as
    $$p(X_{1:n}|Y_{0:n},\psi^{t-1})=
    p(X_n|Y_{0:n},\psi^{t-1})
    \prod_{k=1}^{n-1}p(X_k|X_{k+1},Y_{0:n},, \psi^{t-1}).
    $$
Provided that $X_{k+1}$ is known, $p(X_k|X_{k+1},Y_{0:n}, \psi^{t-1})$ is a discrete distribution, suggesting 
the following sampling strategy: for
 $k = 2,\ldots, n$ and $i=1,\ldots,m$, compute recursively the optimal filter as 
\begin{eqnarray*}
\lefteqn{ p(X_k=i|Y_{0:k},\psi^{t-1})}\\
    &\propto& p(Y_k|Y_{k-1},X_k=i, \psi^{t-1})
    \sum_{i=1}^{m}a_{ij}p(X_{k-1}=j|Y_{1:k-1},\psi^{t-1}).
\end{eqnarray*}
    
Then, sample $X_n$ from $p(X_n|Y_{0:n},\psi^{t-1})$ and for 
$k=n-1,\ldots,1$, $X_k$ is
sampled from
    $$
    p(X_k=i|X_{k+1}=x_{k+1},Y_{0:k},\psi^{t-1})
    =\displaystyle\frac
    {a_{i{x_{k+1}}}p(X_k=i|Y_{0:k},\psi^{t-1})}
    {\sum_{l=1}^{m}a_{il}p(X_k=l|Y_{0:k},\psi^{t-1})}.
    $$
    
Following the proof in Rosales \cite{Rosales}, we will show that the sequence $\{X_{1:n}^{t}\}$ 
is an ergodic Markov chain  with 
invariant distribution $p(X_{1:n}=x_{1:n}|Y_{0:n},\psi^*)$.
It is sufficient to note that the sequence $\{X_{1:n}^{t}\}_{t\geq 1}$ is an irreducible and aperiodic Markov chain 
on a finite state space, $\{1,\ldots,m\}^N$. Irreducibility and
aperiodicity follow directly from the positivity of the kernel,
    \begin{eqnarray*}
    Q(X_{1:n}^{t}|X_{1:n}^{t-1},Y_{0:n}, \psi^t)\!\propto&\!\!p(X_n^{t}|Y_{0:n},\psi^{t-1})
    \prod_{n=1}^{N-1}
    p(X_n^{t}|X_{n+1}^{t},Y_{0:n},\psi^{t-1})>0.\
    \end{eqnarray*}

In this case the standard ergodic result for finite
Markov chains applies, (Kemeny y Snell \cite{Kemeny}) 
\begin{equation}
\label{ConverFiltro}
\|Q(X_{1:n}^{t}|X_{1:n}^{t-1},Y_{0:n},\psi^{t-1})-p(X_{1:n}|Y_{0:n},\psi^*)\|\leq c\zeta^{t}, 
\end{equation}

Moreover, \eqref{ConverFiltro} is satisfied with
$c=card(\{1,\ldots,m\}^N)$, $\zeta=(1-2Q_x)$ and 
$$Q_x=\inf_{x',\psi'}Q(x'|x,\psi'),$$
for $x,x'\in\{1,\ldots,m\}^N$.

\subsubsection*{Step E: Estimation}
In each iteration of this algorithm,
we evaluate $\nabla_\theta U(y,Y_{1:n},X_{1:n},\theta)$ the gradient of the potential. 
For each $1\leq i\leq m$, we compute the components
$$
\frac{\partial U}{\partial\theta_i}(y,Y_{1:n},X_{1:n},\theta)=\hat{g}_i(y,Y_{1:n},X_{1:n})-\theta_i\hat{f}_i(y,Y_{1:n},X_{1:n}).
$$
In each iteration this quantity is updated. It has the advantage of that the ratio $\hat{r}_i$ is not computed directly, avoiding the zeros of the function $\hat{f}_i$.

\subsubsection*{Step A: Average (or Aggregation)}
To reduce the asymptotic variance of the  estimated parameters $\{\theta^t\}$, we adopt the averaging
technique introduced by Polyak and Juditsky (see  \cite{Polyakuditsky}). 
The idea is to use the averages $\{\bar{\theta}^t\}$
defined by $\bar{\theta}^t=1/t\sum_{k=1}^t\theta^k$ instead of $\{{\theta}^t\}$, which can be
recursively computed by means of the equation \eqref{promedio}. 

\subsection{Consistency}
The convergence analysis of Robbins-Monro approximations
are well studied in Duflo \cite{duflo} in the general case. In this paper we
use a similar framework as in  Cappe {\it et. al.} (\cite{cappe-moulines-ryden},
p\'ag. 431), for the convergence of the stochastic gradient algorithm for the  likelihood function in hidden Markov models, considering that in our particular case $u(\theta)$ 
is a con\-ti\-nuous\-ly differentiable function of  $\theta$. The following  convergence result is given for each $y$ fixed.

\begin{teor}\label{conv_RM}
%AGREGAR LAS HIPOTESIS NECESARIAS.
Assume that $\{\gamma_t\}$ is a positive sequence such that
$$\sum_t \gamma_{t}=\infty,\quad \ \sum_t\gamma_{t}^2<\infty,$$
and that the closure of the set $\{\bar{\theta}^{t}\}$ is a compact subset of
$\Theta$. Then, almost surely, the sequence  $\{\bar{\theta}^{t}\}$ 
satisfies $\lim_{t\to\infty}\nabla_{\theta}u(y,Y_{1:n},\bar{\theta}^t)=0$. Furthermore, 
$\lim_{t\to\infty}{\bar{\theta}}^t=\theta^*$ and $\nabla_{\theta}u(y,Y_{1:n},\theta^*)=0$, a.s.
\end{teor}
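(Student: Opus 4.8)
The plan is to recognise the Restoration--Estimation recursion \eqref{GradienteEst} as a Robbins--Monro stochastic approximation scheme driven by the mean vector field $h(\theta)=-\nabla_\theta u(y,Y_{1:n},\theta)$ and to invoke the general convergence theory for such schemes (Duflo \cite{duflo}; Cappe {\it et al.} \cite{cappe-moulines-ryden}, p.~431) with $u(y,Y_{1:n},\cdot)$ playing the role of the Lyapunov function. By Lemma~\ref{lema_GradEst} the potential $u(y,Y_{1:n},\theta)$ in \eqref{constrastelimite} is, for fixed $y$ and fixed $Y_{1:n}$, a convex quadratic function of $\theta$: its gradient is affine and its Hessian is the diagonal matrix with $i$-th entry $\tfrac{2}{nh}\sum_{k=1}^n K_h(y-Y_k)\mathbb{P}(X_{k+1}=i\mid Y_{0:n},\theta')$, which under B1--B2 and E6--E7 is strictly positive. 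Hence $u(y,Y_{1:n},\cdot)$ is strictly convex with a unique minimiser $\theta^*$, the stationary set $\{\theta:\nabla_\theta u(y,Y_{1:n},\theta)=0\}$ reduces to $\{\theta^*\}$, and $\langle\nabla_\theta u(\theta),h(\theta)\rangle=-\|\nabla_\theta u(\theta)\|^2<0$ away from $\theta^*$, which is the descent condition required by the ODE method.

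Next I would verify the remaining hypotheses of the stochastic approximation theorem. The step-size conditions $\sum_t\gamma_t=\infty$ and $\sum_t\gamma_t^2<\infty$ are assumed, and the hypothesis that the closure of $\{\bar\theta^t\}$ is a compact subset of $\Theta$ supplies the stability of the iterates (combined with the averaging recursion \eqref{promedio}, it confines $\{\theta^t\}$ to a compact set as well). On such a compact set, and since $Y_{1:n}$ is a fixed finite sample with $K_h$ bounded by B1, the maps $\theta\mapsto\nabla_\theta U(y,Y_{1:n},x_{1:n},\theta)$ and $\theta\mapsto\nabla_\theta u(y,Y_{1:n},\theta)$ are uniformly bounded and Lipschitz; hence the perturbation $\varsigma_t=-\nabla_\theta U(y,Y_{1:n},X_{1:n}^t,\theta^{t-1})+\nabla_\theta u(y,Y_{1:n},\theta^{t-1})$ has uniformly bounded conditional second moment. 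Writing $M_t=\sum_{s=1}^t\gamma_s\varsigma_s$, the martingale component is $L^2$-bounded because $\sum_s\gamma_s^2<\infty$ and therefore converges a.s.; the remaining bias comes from the fact that Step~R draws $X_{1:n}^t$ from $p(\cdot\mid Y_{0:n},\psi^{t-1})$ rather than from the limiting filter $p(\cdot\mid Y_{0:n},\psi^*)$, and the geometric ergodicity \eqref{ConverFiltro} of the Carter--Kohn sampler makes this bias $O(\zeta^{t})$ with $\zeta<1$, hence $\gamma_t$-summable and negligible.

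With these ingredients in place, the stochastic approximation / Robbins--Siegmund argument gives $\theta^t\to\theta^*$ a.s.\ (the unique zero of $\nabla_\theta u$), so that $u(y,Y_{1:n},\theta^t)$ converges and $\nabla_\theta u(y,Y_{1:n},\theta^t)\to0$; Ces\`aro averaging then yields $\bar\theta^t\to\theta^*$, and continuity of $\nabla_\theta u$ gives $\lim_{t\to\infty}\nabla_\theta u(y,Y_{1:n},\bar\theta^t)=\nabla_\theta u(y,Y_{1:n},\theta^*)=0$. Together with the compactness of the closure of $\{\bar\theta^t\}$ in $\Theta$ this establishes both assertions of the theorem.

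\textbf{Main obstacle.} The genuinely delicate point is the control of $\varsigma_t$: the simulated regimes $X_{1:n}^t$ are generated by a Markov kernel whose invariant law depends on the moving parameter $\psi^{t-1}$, so $\varsigma_t$ is not a clean martingale increment but carries a Markov-modulated bias, while the identity $\mathbb{E}_{\theta'}(\nabla_\theta U\mid\mathcal{F}_{t-1})=\nabla_\theta u$ of Lemma~\ref{lema_GradEst} is stated relative to the limiting law. Making the convergence rigorous therefore requires either leaning on \eqref{ConverFiltro} to absorb this bias into a summable remainder, or invoking the full ``controlled Markov chain'' stochastic-approximation framework of \cite{cappe-moulines-ryden}, whose drift and regularity conditions must be checked for the present MS-NAR model; that verification, rather than the convexity/Lyapunov bookkeeping, is where the real work lies. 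A secondary technical wrinkle is passing from the stated compactness of $\{\bar\theta^t\}$ to the boundedness of the raw iterates $\{\theta^t\}$ that the stochastic-approximation theorem directly controls.
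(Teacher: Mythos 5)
Your proposal is correct and follows essentially the same route as the paper: rewrite the recursion \eqref{GradienteEst} as a stochastic gradient scheme perturbed by the noise $\varsigma_t$, show that $M^t=\sum_{s}\gamma_s\varsigma_s$ converges a.s.\ (you via $L^2$-boundedness from $\sum_t\gamma_t^2<\infty$ and a bounded conditional second moment; the paper via the same moment bound, obtained from the centered-Bernoulli covariance bound $\chi_i^t(k,k')\le 1/4$, plus the conditional Borel--Cantelli lemma of Capp\'e {\it et al.}), then invoke their Theorem 11.3.2 to get $\nabla_\theta u(y,Y_{1:n},\theta^t)\to0$, and finish with continuity of $\nabla_\theta u$ and Ces\`aro averaging. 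Two points of comparison are worth noting. First, your observation that $u(y,Y_{1:n},\cdot)$ is a strictly convex quadratic with a unique minimiser is an addition the paper does not make; it actually strengthens the argument, since the paper asserts the existence of $\lim_t\theta^t=\theta^*$ without isolating the stationary point, whereas your convexity remark identifies the zero of $\nabla_\theta u$ uniquely. Second, what you flag as the ``main obstacle'' --- a Markov-modulated bias in $\varsigma_t$ requiring \eqref{ConverFiltro} or a controlled-Markov-chain framework --- is not where the paper's work lies: in Step R the Carter--Kohn forward--backward pass draws $X_{1:n}^t$ \emph{exactly} from $p(X_{1:n}\mid Y_{0:n},\psi^{t-1})$, so by Lemma~\ref{lema_GradEst} (applied with $\theta'=\psi^{t-1}$) the increment $\varsigma_t$ is an exact martingale difference and no geometric-ergodicity correction is needed. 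The genuine subtlety that both you and the paper leave implicit is a different one: the function $u$ in \eqref{constrastelimite} depends on the conditioning parameter $\theta'$ through the filter weights $\mathbb{P}(X_{k+1}=i\mid Y_{0:n},\theta')$, hence varies with $t$ along the algorithm, while the convergence statement treats $u$ as a fixed potential; making that identification rigorous (e.g.\ by freezing $\theta'$ at a limit point, or by a controlled-Markov-chain argument as you suggest) is the step neither write-up fully carries out.
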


\begin{proof}
Let $M^t=\sum_{s=0}^t\gamma_s\varsigma_s$. 
The sequence $\{M^t\}$ is an $\mathcal{F}_t$ martingale, in fact
\begin{eqnarray*}
\mathbb{E}(M^t|\mathcal{F}_{t-1})&=&\mathbb{E}(\gamma_{t}\varsigma_t+M^{t-1}|\mathcal{F}_{t-1})\\
&=&\mathbb{E}(\gamma_{t}\varsigma_t|\mathcal{F}_{t-1})+\mathbb{E}(M^{t-1}|\mathcal{F}_{t-1})\\
&=&M^{t-1}.
\end{eqnarray*}
Moreover, it satisfies 
 $\sum_{t=1}^{\infty}\mathbb{E}(\|M^{t}-M^{t-1}\|^2|\mathcal{F}_{t-1})< \infty$.  Indeed, 
$$
\mathbb{E}(\|M^{t}-M^{t-1}\|^2|\mathcal{F}_{t-1})= \gamma_{t}^2\mathbb{E}(|\varsigma_{t}|^2|\mathcal{F}_{t-1})
$$
and
\begin{eqnarray*}
\|\varsigma_{t}\|^2 &=& \sum_{i=1}^{m}\left(\frac{\partial U}{\partial\theta_i}(y,Y_{1:n},X^{t-1},\theta^{t-1})
-\frac{\partial u}{\partial\theta_i}(y,Y_{1:n},\theta^{t-1})\right)^2\\
&=&\frac{4}{n^2h^2}\sum_{i=1}^{m}\left(\sum_{k=1}^n(Y_{k+1}-\theta_i^{t-1}) K_h(y-Y_k)\left(B_i^t(k)|\mathcal{F}_{t-1})\right)\right)^2,
\end{eqnarray*}
where $B_i^t(k)=\car_i(X_{k+1}^{t})- \mathbb{E}(\car_i(X_{k+1}^{t})$ are Bernoulli centered random variables. Then, $\mathbb{E}(\|\varsigma_{t}\|^2|\mathcal{F}_{t-1})$ is 
\begin{eqnarray*} 
\frac{4}{n^2h^2}\sum_{i=1}^{m}\sum_{k,k'=1}^n(Y_{k+1}-\theta_i^{t-1})(Y_{k'+1}
-\theta_i^{t-1})K_h(y-Y_k)K_h\left({y-Y_{k'}}\right)\chi_i^t(k,k'),
\end{eqnarray*}
with $\chi_i^t(k,k')=\mbox{cov}(\car_i(X_{k+1}^{t}),\car_i(X_{k'+1}^{t})|\mathcal{F}_{t-1})$. Thus, by Cauchy-Schwarz's inequality
we have 
$$
\chi_i^t(k,k')\leq
\sqrt{\mbox{var}(B_i^t(k)|\mathcal{F}_{t-1})}\sqrt{\mbox{var}(B_i^t(k')|\mathcal{F}_{t-1})}\leq 1/4,
$$ 
and
\begin{eqnarray}\label{moment2_marting}
\mathbb{E}(\|\varsigma_{t}\|^2|\mathcal{F}_{t-1})&\leq& \frac{1}{n^2h^2}\sum_{i=1}^{m}\left(\sum_{k=1}^n(Y_{k+1}-\theta_i^{t-1})K_h(y-Y_k)\right)^2\\ 
\nonumber &=& \| \Psi(\theta^{t-1})\|^2,
\end{eqnarray}
where $\Psi(\theta)=(\Psi_1(\theta),\ldots,\Psi_m(\theta))$ and
$$
\Psi_i(\theta)= \frac{1}{n^2h^2}\left(\sum_{k=1}^n(Y_{k+1}-\theta_i)K_h(y-Y_k)\right)^2.
$$
By compactness $\|\Psi(\theta)\|^2$ is finite, therefore
$$
\mathbb{E}(\|M^{t}-M^{t-1}\|^2|\mathcal{F}_{t-1})\leq \|\Psi(\theta^{t-1})\|^2\sum_{t=1}^{\infty}\gamma_{t}^2<\infty.
$$ 
Thus, by applying conditional Borel-Cantelli lemma in Cappe {\it et. al} (\cite{cappe-moulines-ryden},  Lemma 11.2.9)  
the sequence $\{M^t\}$ has a finite limit a.s. and according to Theorem 11.3.2 in \cite{cappe-moulines-ryden} 
the sequence $\{\theta^t\}$ satisfies 
$$
\lim_{t\to\infty}\nabla_{\theta}u(y,Y_{1:n},{\theta}^t)=0.
$$
By continuity of the function $\nabla_{\theta}u$ we proved that $\theta^*=\lim_{t\to\infty}{\theta}^t$
satisfies $\nabla_{\theta}u(y,Y_{1:n},\theta^*)=0$, and by Ces\`aro theorem, $\lim_{t\to\infty}\bar{\theta}^t=\theta^*$.
\end{proof} 

The critical point $\theta^{*}\in \Theta\subset \mathbb{R}^m$ of the gradient of $u$, has $i$-th component $\theta_i^{*}$ given by
$$
\theta_i^{*}(y,Y_{1:n})=\frac{\sum_{k=1}^{n-1}Y_{k+1}K_h(y-Y_k)\mathbb{P}(X_{k}=i|Y_{0:n},\theta)}
{\sum_{k=1}^{n-1}K_h(y-Y_k)\mathbb{P}(X_{k}=i|Y_{0:n},\theta)}.
$$
We can prove, following the Theorem \ref{conuniforme}, that $ \theta_i^{*}(y,Y_{1:n})\to r_i(y)$, {\it a.s.}, whenever
$n\to \infty$.  As 
a consequence we obtain
$$
\lim_{n\to\infty}\lim_{t\to\infty}\theta_i^{t}(y,Y_{1:n})=r_i(y),\ a.s.
$$

%%%%%%%%%%%%%%%%%%%%%%%%%%%%%%%%%%%%%%%%%%%%%%%%%%%%%%%%%%%%%%%%%%%%%%%

\section{Numerical examples}
In this section we illustrate the performances of the algorithms developed in the previous section by applying
them to simulated data. We work a MS-NAR  with $m = 2$ states and autoregressive functions
$$
r_1(y)=0.7y+2e^{(-10y^2)},\ \ r_2(y)=\frac{2}{1+e^{10y}}-1,
$$
where $r_1$ is a bump function and $r_2$ is a decreasing logistic function. These 
functions was considered by Franke {\it et. al. } \cite{franke-stockis}. 
Let $\Phi$  be a gaussian white noise with variance $\sigma^2=0.4$.
The transition probability matrix is given by 
$$
A=\left(\begin{array}{cc}
 0.98 & 0.02\\
0.02 & 0.98
\end{array}\right).
$$

We used a straightforward implementation of the algorithms described.
We generate a sample of length $n = 1000$. For each $k$, we simulate  $X_k$ and then  use it to simulate $Y_k$. The simulated data is plotted in Figure \ref{fig1} (left).

\begin{figure}[h]
\centering
\subfigure{
\includegraphics[width=0.5\textwidth]{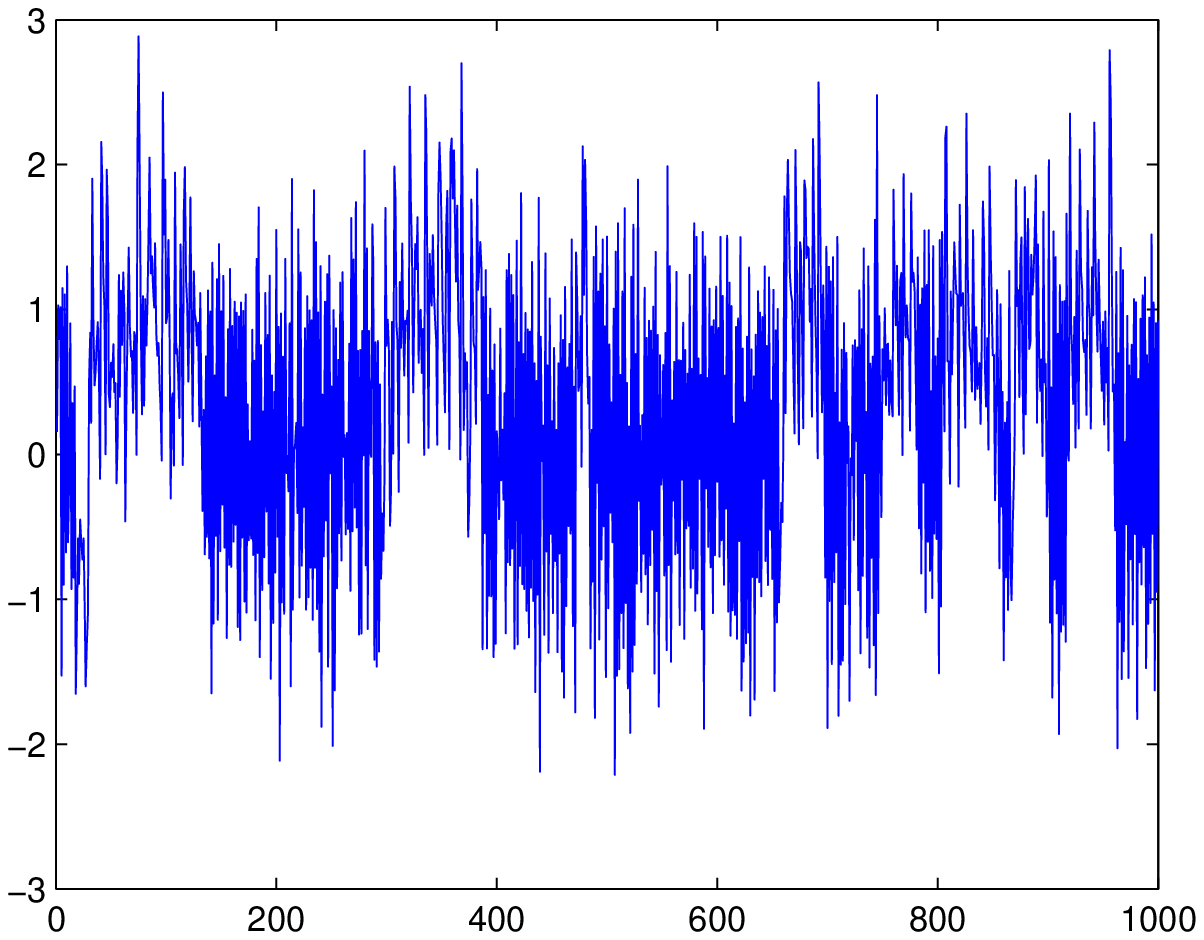}
%\label{fig4a}
}\hspace{-0.6cm}
\subfigure{
\includegraphics[width=0.5\textwidth]{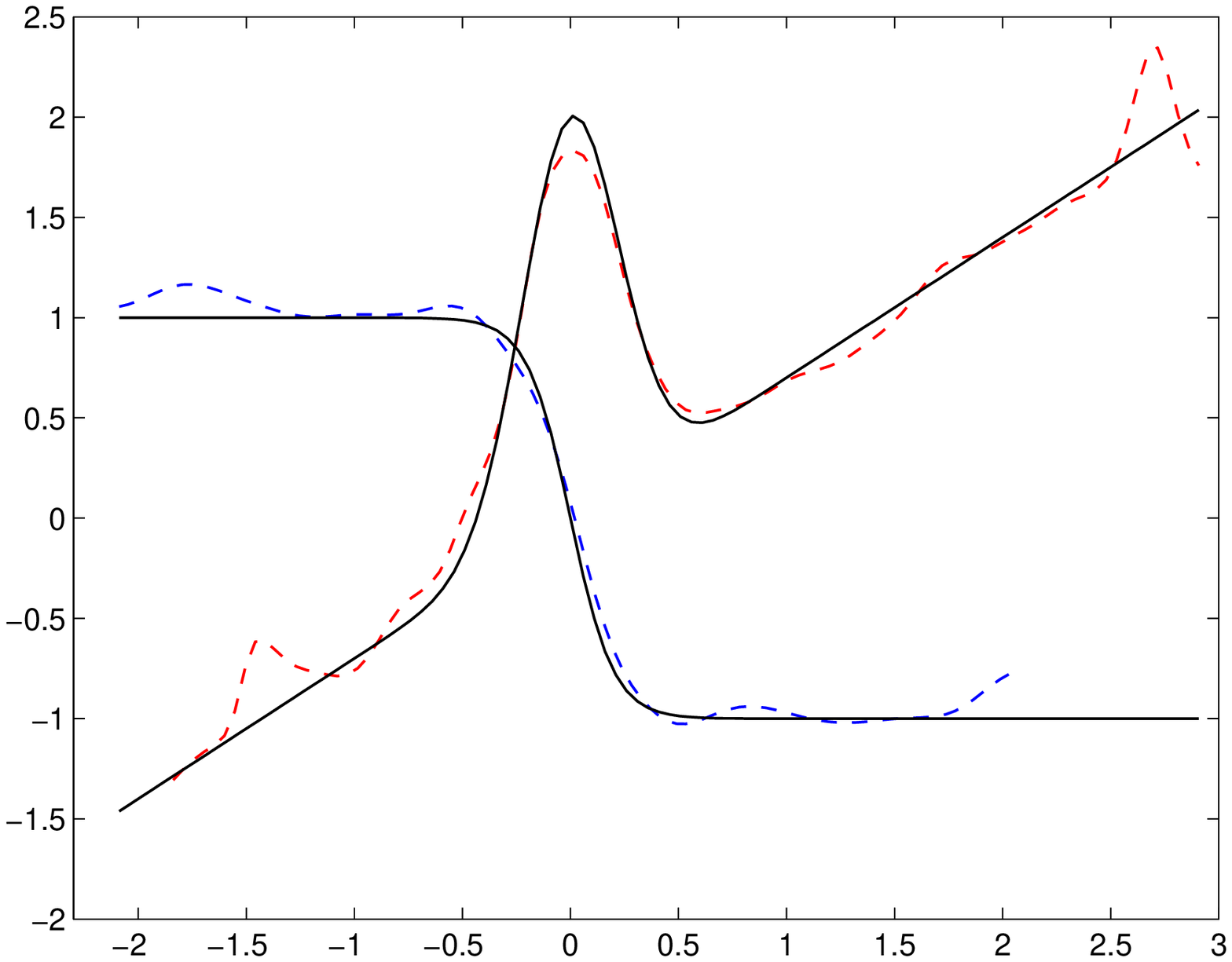}
%\label{fig4b}
}
\caption{Simulated data $Y_{1:n}$ (left). Estimated kernel functions for complete data $(X_{1:n}, Y_{1:n})$ (right).
}\label{fig1}
\end{figure}

For the estimation of the regression function $r_i$, we use the standard gaussian density as the kernel function $K$, in spite of the fact that it is not compactly supported. As bandwidth parameter we take $h=\left(n/\log(n)\right)^{1/5}$. 

Assuming that the complete data $\{Y_k,X_k\}_{k=1:1000}$ is available, we show in Figure 
\ref{fig1} (right) the performance of  $r_1$ and $r_2$ (solid curved line) and their respective kernel estimates (dotted curved line).

We implemented the Restauration-Estimation algorithm for the data described above.
The initial estimates for the Markov chain $X_{1:n}^0$ in the step 0 of our algorithm
was obtain by using a SAEM algorithm for the MS-AR  model,
\begin{equation*}
%\label{lineal}
 Y_n=\rho_ {X_n}Y_{n-1}+b_{X_{n}}+\sigma_{X_n}e_n.
\end{equation*}
The parameters estimates obtained using this implementation are,
$$
\hat{A}=\left(\begin{array}{cc}
  0.983 & 0.017\\
0.017 & 0.983
\end{array}\right),
$$
and the linear functions estimates are of the form $\hat{r}_1(y)=-0.8239y-0.0218$ and
$\hat{r}_2(y)=0.2943y+ 0.6334$.

\begin{figure}[h]
\centering
\subfigure{
\includegraphics[width=0.5\textwidth]{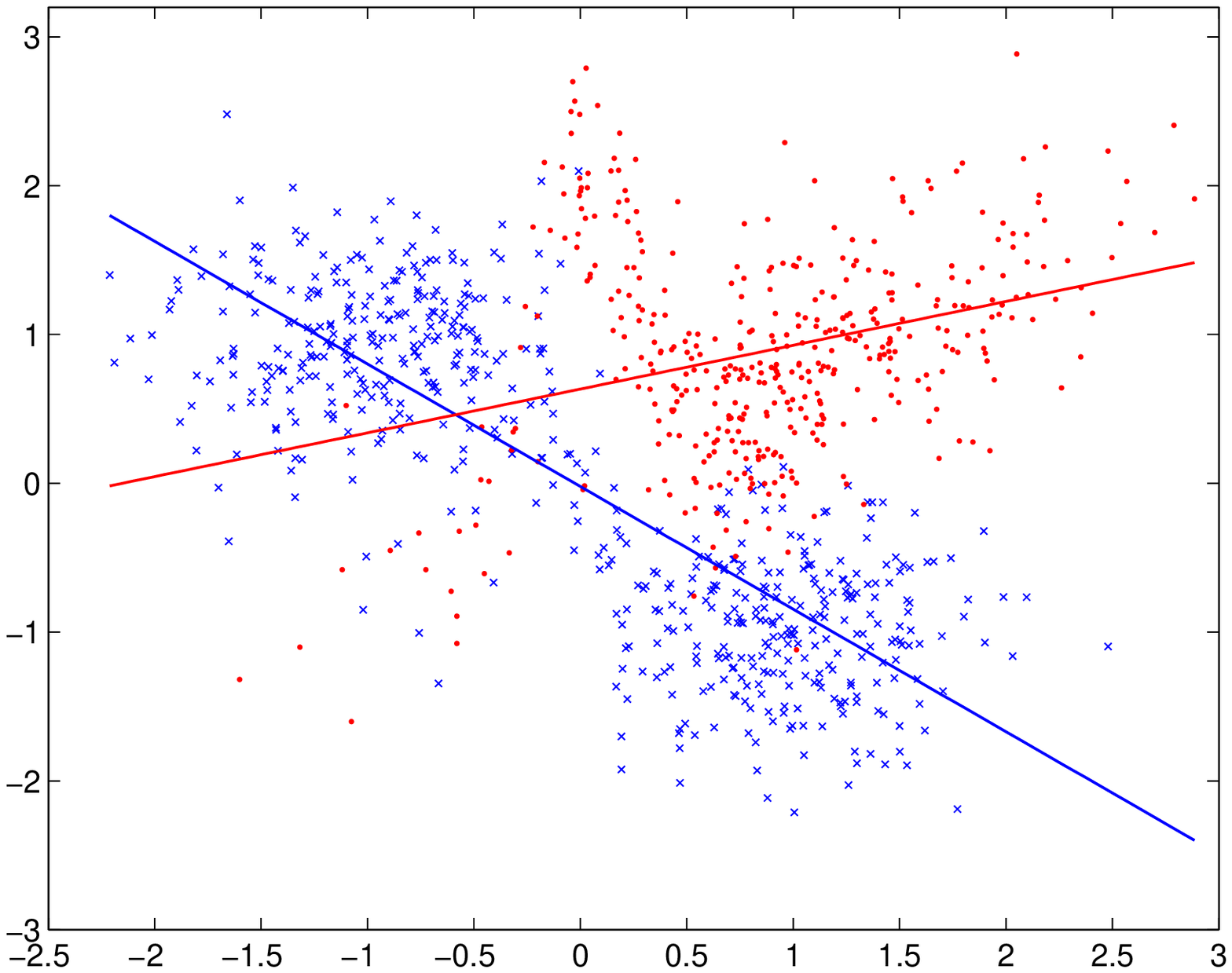}
%\label{fig4a}
}\hspace{-0.6cm}
\subfigure{
\includegraphics[width=0.5\textwidth]{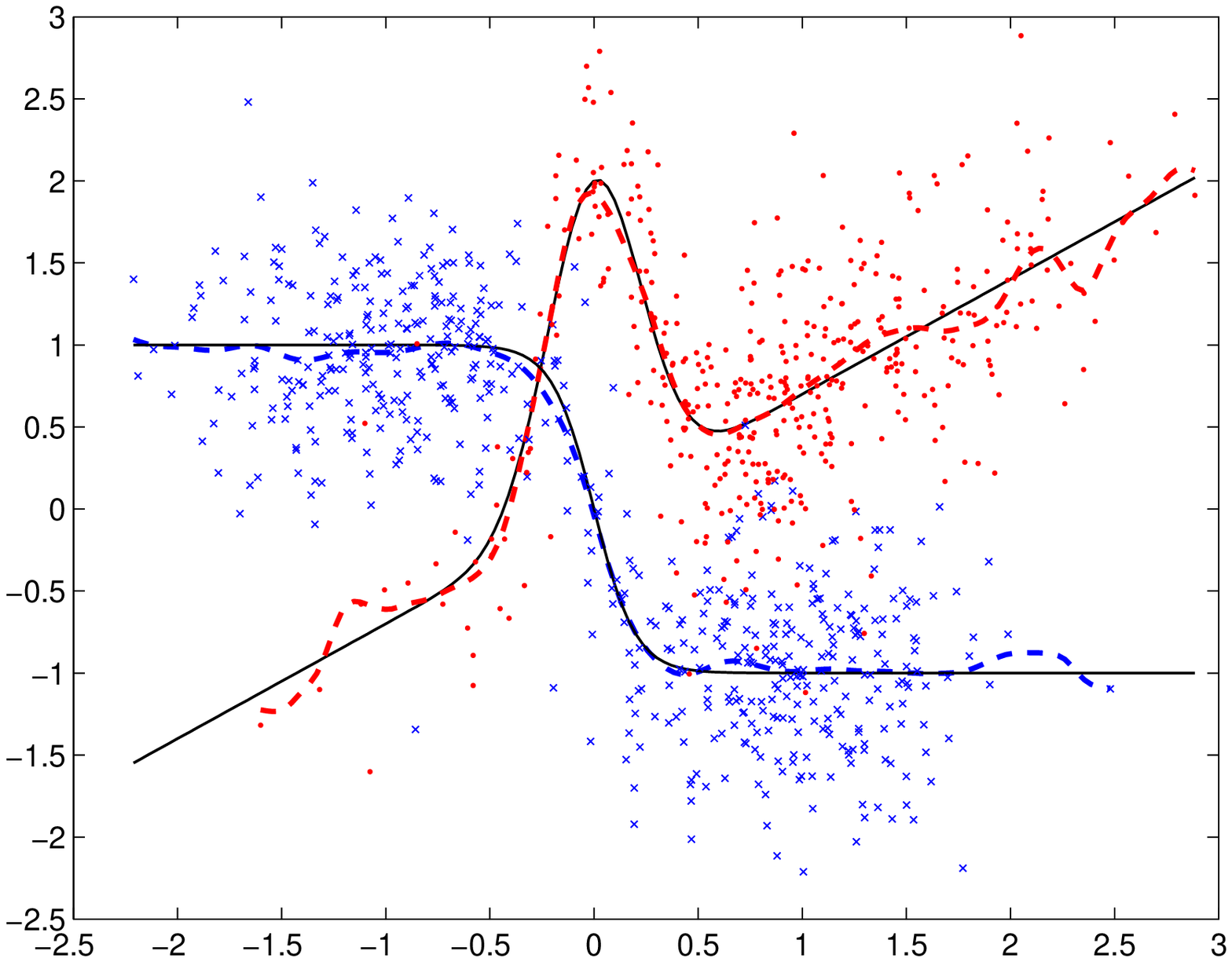}
%\label{fig4b}
}
\caption{Parameter estimation for SAEM and scatter plot of simulated data (left). Nonparametric estimation by Robbins-Monro procedure (right).
}\label{fig2}
\end{figure}

Figure \ref{fig2} (left) shows the scatter plot of $Y_k$ against $Y_{k-1}$ and
the linear adjustment. 

We implement our Robbins-Monro procedure with $t= 1:T$ iterations and the smoothing step was defined as 
$$
\gamma_t =\left\{
\begin{array}{ll}
 1& t\leq T_1\\
 (t-T_1)^{-1}&t\geq T_1+1
\end{array}\right. 
.$$
In Figure \ref{fig2} (right) we show the scatter plot of $Y_k$ against $Y_{k-1}$,  $r_1$ and $r_2$ (solid curved lines) and
the respective Robbins-Monro estimates (dotted curved lines) for the ultimate iteration.

%%%%%%%%%%%%%%%%%%%%%%%%%%%%%%%%%%%%%%%%%%%%%%%%%%%%%%%%%%%%%%%%%%%%%
\appendix

\section{Proof of technical results}\label{appA}

%%%%%%%%%%%% Lemma 2.1 %%%%%%%%%%%%%%%%%%
\begin{proof}[Proof of Lemma \ref{exisdensidad}] 
Take $(e_1,\ldots,e_n)=T(Y_1,\ldots,Y_n)$,
with  $e_k=Y_k-r_{X_k}(Y_{k-1})$, for $k=1,\ldots,n$. Then, the Jacobian matrix of transformation $T$ is triangular and
the absolute value  of the Jacobian is equal to $1$, hence in virtue of the change of variables theorem 
\begin{eqnarray*}
\lefteqn{\mathbb{E}(h(Y_{1:n},X_{1:n},Y_0))}\\
&=&\mathbb{E}(h(T^{-1}(e_{1:n}),X_{1:n},Y_0))\\
&=&\int\sum_{i_{1:n}}h(T^{-1}(u_{1:n}),i_{1:n},y_0))p(e_{1:n}=u_{1:n},X_{1:n}=i_{1:n},Y_0=y_0)du_{1:n}dy_0.
\end{eqnarray*}
From  independence of $Y_0$, $\{X_k\}_{k\geq1}$ and $\{e_k\}_{k\geq1}$, we have the following factorization 
$$
p(e_{1:n}=u_{1:n},X_{1:n}=i_{1:n},Y_0=y_0)=p(e_{1:n}=u_{1:n})p(X_{1:n}=i_{1:n})p(Y_0=y_0)
$$
and by conditions D1 and E6 we obtain
\begin{eqnarray*}
\lefteqn{
\mathbb{E}(h(Y_{1:n},X_{1:n},Y_0))}\\
&=&\!\!\!\int\sum_{i_{1:n}}
h(T^{-1}(u_{1:n}),i_{1:n},y_0)\prod_{k=1}^n\Phi(u_k)
\prod_{k=2}^na_{i_{k-1}i_k}\mu_{i_1}p(Y_0=y_0)du_{1:n}dy_0.
\end{eqnarray*}
Thus, the first result follows.

Integrating the joint density $p(Y_1^n=y_1^n,X_{1:n}=x_{1:n},Y_0=y_0)$ we have

\begin{eqnarray*}
\lefteqn{p(Y_k=y_k,Y_{k'}=y_{k'})=}\\
&&\sum_{x_1^n}\int p(Y_1^n=y_1^n,X_{1:n}=x_{1:n},Y_0=y_0)dy_{0:k-1}dy_{k+1}
\cdots dy_{k'-1}dy_{k'+1:n}. 
\end{eqnarray*}

Since $\Phi(y_k-r_{x_k}(y_{k-1}))\Phi(y_{k'}-r_{x_{k'}}(y_{k'-1}))\leq\|\Phi\|_{\infty}^2$, then using this bound in the above expression 
the remaining integral terms is equal to $1$, thus
$ p(Y_k=y_k, Y_{k'} =y_{k'})\leq \|\Phi\|_{\infty}^2$.
\end{proof}

%%%%%%%%%%%%%%%%%% Proposition 2.1 %%%%%%%%%%%%%%%%%%%%%%%%%%%

\begin{proof}[Proof of Proposition \ref{ModeloesMixing}] Let $h_\phi(x_{1:n})=\mathbb{E}(\phi(Y_{0:n})|X_{1:n}=x_{1:n})$. Since we 
assume that $Y_0$, the Markov chain $\{X_k\}_{k\geq1}$ and the sequence $\{e_k\}_{k\geq1}$ are mutually independents, we have by the 
properties of the conditional expectation
$$
\mathbb{E}(\phi(Y_{0:n}))=\mathbb{E}(\mathbb{E}(\phi(Y_{0:n})|X_{1:n}=x_{1:n}))=\mathbb{E}(h_\phi(X_{1:n})).
$$
Since the sequence $\{X_k\}_{k\geq1}$ is strictly stationary under condition E1, then 
$$
\mathbb{E}(\phi(Y_{j:j+n}))=\mathbb{E}(h_\phi(X_{j+1:j+n}))=\mathbb{E}(h_\phi(X_{1:n}))=\mathbb{E}(\phi(Y_{0:n})).
$$
Thus, the sequence $\{Y_k\}_{k\geq0}$ is strictly stationary.

On the other hand, for 
$\phi:\mathbb{R}^k\to[0,1]$ and $\xi:\mathbb{R}^l\to[0,1]$ measurable functions we have
\begin{eqnarray*}
%\lefteqn{\mathrm{cov}(\phi(Y_{0:k}),\xi(Y_{k+s+1:\,k+s+l}))}\\
\lefteqn{\mathrm{cov}(\phi(Y_{0:k}),\xi(Y_{k+s:\,k+s+l}))}\\
&=&\mathbb{E}\left( \phi(Y_{0:k})\xi(Y_{k+s:\,k+s+l})\right)-
\mathbb{E}\left( \phi(Y_{0:k})\right)\mathbb{E}\left(\xi(Y_{k+s:\,k+s+l})\right)\\
&=& \mathbb{E}\left( h_\phi(X_{1:k})h_\xi(X_{k+s+1:\,k+s+l})\right)-\mathbb{E}\left( h_\phi(X_{1:k})\right)\mathbb{E}\left(h_\xi(X_{k+s+1:\, k+s+l})\right)\\
&=& \mathrm{cov}(h_\phi(X_{1:k}),h_\xi(X_{k+s+1:\,k+s+l})).
\end{eqnarray*}
Then,
\begin{eqnarray*}
\lefteqn{\alpha_n(Y)}\\
&=&\sup\left\{|\mathrm{cov}(\phi(Y_{0:k}),\xi(Y_{k+s:\,k+s+l}))|:\ 0\leq \phi, \xi \leq 1,\right.\\ &&\hspace{6.4cm} \left. \phi \in \mathcal{M}_{0}^k(Y), \xi\in \mathcal{M}_{k+s}^{k+s+l}(Y)\right\}\\
&=&\sup\left\{|\mathrm{cov}(h_\phi(X_{1:k}),h_\xi(X_{k+s+1:\,k+s+l}))|:\ 0\leq h_{\phi}, h_{\xi}\leq1,\right.\\
&&\hspace{5.8cm} \left. h_{\phi}\in 
\mathcal{M}_{1}^k(X), h_{\xi}\in \mathcal{M}_{k+s+1}^{k+s+l}(X)\right\}\\
&\leq&\alpha_n(X).
\end{eqnarray*}

Now, we only need to prove that $\alpha_n(X)$ decrease geometrically to 0, when $n$ goes to $\infty$. For this, we use that if the initial 
distribution of the strictly stationary Markov chain $X$ is the invariant measure $\mu$ then 
$
\beta_n(X)=\sum_{j=1}^m \sup_{i=1:m}|A_{i,j}^{(n)}-\mu_{i}|\mu_{j},
$
and $\{X_n\}_{n\geq1}$ is geometric ergodic, i.e. there exists $0<\zeta<1$ such that $\sup_i|A_{i,j}^{(n)}-\mu_{i}|\leq m \zeta^n$, obtaining that 
$$
2\alpha_n(Y)\leq 2\alpha_n(X)\leq \beta_n(X)\leq m \zeta^n.
$$
Thus, the result follows.
\end{proof}

%%%%%%%%%%%% Lemma 2.2 %%%%%%%%%%%%%%%%%% 
\begin{proof}[Proof of Lemma \ref{asympt_cov}]
Considering the variance term, we have
\begin{eqnarray*}
\mathrm{var}(T_{0,n})&=& a^2\,\mathrm{var}\left(K_h\left({y-Y_0}\right)\car_i(X_{1})\right)\\
&&+2ab\,\mathrm{cov} \left(Y_{1}\car_{\{|Y_1|\leq M_n\}}K_h\left({y-Y_0}\right),K_h\left({y-Y_0}\right)\car_i(X_{1})\right)\\
&&+b^2\,\mathrm{var}\left(Y_{1}\car_{\{|Y_1|\leq M_n\}}K_h\left({y-Y_0}\right)\car_i(X_{1})\right). 
\end{eqnarray*}

Hence, 
\begin{eqnarray*}
\lefteqn{\mathrm{var}(T_{0,n})}\\ 
&=&a^2h\!\!\int f_i(y-zh)K^2(z)dz\\
& & +2abh\!\!\int\!\! \mathbb{E}(Y_{k+1}\car_{\{|Y_{k+1}|\leq M_n\}}|X_{k+1}=i, Y_{k}=y-zh ) K^2(z)g_i(y-zh) dz\\
& & +b^2h\!\!\int\mathbb{E}(Y^2_{k+1}\car_{\{|Y_{k+1}|\leq M_n\}}|X_{k+1}=i, Y_{k}=y-zh ) K^2(z) g_{2,i}(y-zh) dz\\ 
& & - h^2\left[a \int f_i(y-zh)K(z)dz\right. \\
&&+ b \left.\int\mathbb{E}(Y_{k+1}\car_{\{|Y_{k+1}|\leq M_n\}}|X_{k+1}=i, Y_{k}=y-zh ) K(z) f_i(y-zh) dz \right]^2.\\
\end{eqnarray*}
From Dominated Convergence Theorem and conditions R2 and R3 we have when $n\to \infty$
$$
\mathrm{Var}(T_{1,n}) \approx h (a^2f_i(y)+2abg_i(y)+b^2g_{2,i}(y))\int K^2(z)dz + o(h^2).
$$

Now, for the covariance terms we define
 $U_k^s=(Y_k\car_{\{|Y_k|\leq M_n\}})^s$, $s=0,1,2$.
As the process is stationary it suffices to consider 
$$
 \mathrm{cov}\left(U_1^sK_h\left({y-Y_0}\right)\car_i(X_1),U_{k+1}^lK_h\left({y-Y_{k}}\right)\car_i(X_{k+1})\right).
$$

Due to $\alpha$-dependency using the inequality of Rio, \cite{Rio}, the covariance is
bounded by
\begin{eqnarray*}
\lefteqn{\mathrm{cov}\left(U_1^sK_h\left({y-Y_0}\right)\car_i(X_1),U_{k+1}^lK_h\left({y-Y_{k}}\right)\car_i(X_{k+1})\right)}\\
&&\hspace{2cm}\leq M_n^{s+l} \mathrm{cov}\left(K_h\left({y-Y_0}\right)\car_i(X_1),K_h\left({y-Y_{k}}\right)\car_i(X_{k+1})\right) \\
&&\hspace{2cm}\leq M_n^{s+l} 4 \|K\|_{\infty}^2 \alpha_{k-1}.
\end{eqnarray*}

This gives
$$
\mathrm{cov}(T_{1,n},T_{k+1,n})\leq (a^2 + 2ab M_n+b^2M_n^2)4\|K\|_{\infty}^2 \alpha_{k-1}.
$$
Set $C_k(i,v)=\{X_{k+1}=i, Y_k=v\}$ and denote by $A^{(k)}_{ij}$ the $(i,j)$-th entry
of the $k$-th power of the matrix $A$. Then,
\begin{eqnarray*}
\lefteqn{\mathbb{E}\left(U_1^sU_{k+1}^lK_h\left({y-Y_0}\right)\car_i(X_1)K_h\left({y-Y_{k}}\right)\car_i(X_{k})\right)}\\
&=&{\int\int
K_h\left({y-u}\right)K_h\left({y-v}\right)
\mathbb{E}\left(U_1^sU_{k+1}^l|C_0(i,u), C_k(i,v) \right)}\\
&&\hspace{4cm}\times  p(Y_0=u,Y_k=v) \mathbb{P}(X_1=i,X_{k+1}=i)dudv\\
&\leq& A^{(k)}_{ii}\mu_i h^2{\int\int
K\left(u\right)K\left(v\right)
\mathbb{E}\left(|U_1^sU_{k+1}^l||C_0(i,y-uh), C_k(i,y-vh)\right)}\\
&&\hspace{5.6cm}\times  p(Y_0=y-uh,Y_k=y-vh) dudv\\
&\leq& A^{(k)}_{ii}\mu_i h^2{\int\int
K\left(u\right)K\left(v\right)
\mathbb{E}\left(|Y_1^sY_{k+1}^l||C_0(i,y-uh), C_k(i,y-vh)\right)}\\
&&\hspace{5.6cm}\times  p(Y_0=y-uh,Y_k=y-vh) dudv.
\end{eqnarray*}
We evaluate in each case  $0\leq s+l\leq 2$, $s,l\in \mathbb{N}$.  
\begin{itemize}
 \item For $s+l=0$,
$$
\mathbb{E}\left(|Y_1^sY_{k+1}^l||X_1=i, Y_0=u, X_{k+1}=i, Y_k=v\right)=1
$$
and in this case
\end{itemize}
$$
\mathrm{cov}\left(U_1^sK_h\left({y-Y_0}\right)\car_i(X_1),U_{k+1}^lK_h\left({y-Y_{k}}\right)\car_i(X_{k+1})\right)
\leq h^2 A^{(k)}_{ii}\mu_i\|\Phi\|_{\infty}^2.
 $$
\begin{itemize}
\item For $s+l=1$  we consider only the case $s=0,\, l=1$, where it holds,
\begin{eqnarray*}
\lefteqn{\mathbb{E}\left(|Y_1^sY_{k+1}^l||X_1=i, Y_0=u, X_{k+1}=i, Y_k=v\right)}\\
&&\hspace{5cm}=\mathbb{E}\left(|Y_{k+1}|| X_{k+1}=i, Y_k=v\right)\\
&&\hspace{5cm}\leq|r_{i}(v)|+\mathbb{E}(|e_{1}|)
\end{eqnarray*}
so that, by continuity of the function $r_{i}(v)$ and the moment condition of $e_1$,
\begin{eqnarray*}
\lefteqn{\mathrm{cov}\left(U_1^sK_h(y-Y_0)\car_i(X_1),U_{k+1}^lK_h\left({y-Y_{k}}\right)\car_i(X_{k+1})\right)}\\
&&\hspace{1.4cm}\preceq h^2 \left((|r_{i}(y)|A^{(k)}_{ii}\mu_i\|\Phi\|_{\infty}^2+ o(h))+\mathbb{E}(|e_{1}|)A^{(k)}_{ii}\mu_i\|\Phi\|_{\infty}^2\right).
\end{eqnarray*}
 
\item For $s+l=2$, assume $s=1,\, l=1$,   
$$
\mathbb{E}\left(|Y_1^sY_{k+1}^l||X_1=i, Y_0=u, X_{k+1}=i, Y_k=v\right)=r_{i,k}(u,v).
$$
Then by continuity of the function $r_{i,k}(u,v)$, then
\begin{eqnarray*}
\lefteqn{\mathrm{cov}\left(U_1^sK_h(y-Y_0)\car_i(X_1),U_{k+1}^lK_h(y-Y_k)\car_i(X_{k+1})\right)}\\
&&\hspace{5.2cm}\preceq h^2r_{i,k}(y,y)A^{(k)}_{ii}\mu_i\|\Phi\|_{\infty}^2+ o(h^3), 
\end{eqnarray*}
and for  $s=0,\, l=2$ the result is followed in the same way exchanging $r_{i,k}(y,y)$ by $r_{i,1}(y,y)$.  
\end{itemize}
We consider the truncated variable $\tilde{\Delta}_k=\Delta_k\car_{\{|Y_{k+1}|\leq M_n\}}$.  It remains to consider the covariance term, 
\begin{eqnarray*}
\mathrm{cov}(T_{1,n},T_{k+1,n}) &=&a^2\,\mathrm{cov}\left(K_h(y-Y_0)\car_i(X_{1}),K_h(y-Y_k)
\car_i(X_{k+1})\right)\\
&&+ab\,\mathrm{cov}\left(K_h(y-Y_0)\car_i(X_{1}),\tilde{\Delta}_k\right)\\
&&+ab\,\mathrm{cov}\left(\tilde{\Delta}_0,K_h(y-Y_k)
\car_i(X_{k+1})\right)\\
&&+b^2\,\mathrm{cov}\left(\tilde{\Delta}_0,\tilde{\Delta}_k\right).
\end{eqnarray*}
By collecting the bounds we obtain,
$$
\mathrm{cov}(T_{1,n},T_{k+1,n})\preceq a^2c_1h^2+2abc_2(y)h^2+b^2c_3(y)h^2.
$$
\end{proof}

%%%%%%%%%%%%%%%%%%%%%%%%%% Lemma 2.3 %%%%%%%%%%%%%%%%%%%%%%%%%%

\begin{proof}[Proof of Lemma \ref{cvarianza}] Define the truncated kernel estimator of $g_i$,
$$
\tilde{g}_i(y)=\frac{1}{nh}\sum_{k=0}^{n-1}\tilde{\Delta}_k.
$$
Thus,
\begin{eqnarray*}
\lefteqn{\mathbb{P}(|\hat{g}_i(y)-\mathbb{E}\hat{g}_i(y)|\geq 2\epsilon)}\\ 
&&\quad \leq \mathbb{P}(|\tilde{g}_i(y)-\mathbb{E}\tilde{g}_i(y)|\geq \epsilon)+
\mathbb{P}(|\hat{g}_i(y)-\tilde{g}_i(y)-\mathbb{E}(\hat{g}_i(y)-\tilde{g}_i(y)|)\geq\epsilon).
\end{eqnarray*}
By Chebyshev's inequality,
$$
\mathbb{P}(|\hat{g}_i(y)-\tilde{g}_i(y)-\mathbb{E}(\hat{g}_i(y)-\tilde{g}_i(y)|)\geq\epsilon)\leq 
\epsilon^{-2}\mathrm{var}(|\hat{g}_i(y)-\tilde{g}_i(y)|)
$$
and by  definition of the variance, 
$$
\mathrm{var}(|\hat{g}_i(y)-\tilde{g}_i(y)|)\leq \mathbb{E}\left(\hat{g}_i(y)-\tilde{g}_i(y)\right)^2.
$$

We give a bound in the right-hand side of the above inequality using H\"older inequality and the stationary of the model,
\begin{eqnarray*}
\mathbb{E}\left(\hat{g}_i(y)-\tilde{g}_i(y)\right)^2 &\leq & \frac{1}{h^2}
\mathbb{E}\left(Y_1^2 K^2\left(\frac{y-Y_0}{h}\right) \car_{i}(X_{1})\car_{\{|Y_1|> M_n\}}\right)\\
&\leq & h^{-2}\mathbb{E}(|Y_1|^s)\left\|Y_1^{2-s}\car_{\{|Y_1|>M_n\}}K^2 \right\|_{\infty} \\
&\leq& \| K^2 \|_{\infty}M_n^{(2-s)}h^{-2},
\end{eqnarray*}
and 
$$
s_n^2=n^2h^2\mathrm{var}(\tilde{g}_i(y))=n\mathrm{var}(\tilde{\Delta}_0)
+2\sum_{k=1}^{n-1}(n-k)\mathrm{cov}(\tilde{\Delta}_0,\tilde{\Delta}_k).
$$

Now, we will bound the term $s_n^{2}$. First, using point i) of Lemma \ref{asympt_cov} for $a=0$ and $b=1$,
$$
\mathrm{var}(\tilde{\Delta}_0)\preceq  h^2b^2c_3(y)+o(h^3).
$$

Secondly, we use Tran's device to split the covariance 
of $\tilde{\Delta}_k$ into terms:
$$
\sum_{k=1}^{n-1}(n-k)\mathrm{cov}(\tilde{\Delta}_0,\tilde{\Delta}_k)
=\sum_{k=1}^{u_n}(n-k)\mathrm{cov}(\tilde{\Delta}_0,\tilde{\Delta}_k)+\sum_{k=1+{u_n}}^{n-1}(n-k)\mathrm{cov}(\tilde{\Delta}_0,\tilde{\Delta}_k).
$$
In a similar way that in the first bound, applying part ii) of Lemma \ref{asympt_cov} with $a=0$, $b=1$, we obtain for $k\leq u_n$ 
\begin{equation}\label{boundedCov1}
\mathrm{cov}(\tilde{\Delta}_0,\tilde{\Delta}_k)\preceq h^2r_{i,k}(y)\|\Phi\|_{\infty}^2+o(h^3).
\end{equation}
%Thus, $\mathrm{cov}(\tilde{\Delta}_0,\tilde{\Delta}_k)=O(h^2)$.\\

For $k>u_n$ we apply part iii) of Lemma \ref{asympt_cov} obtaining
\begin{equation}\label{boundedCov2}
\mathrm{cov}(\tilde{\Delta}_0,\tilde{\Delta}_k)\leq M_n^2\alpha_{k-1}.
\end{equation}

From inequalities \eqref{boundedCov1}, \eqref{boundedCov2} and taking $u_n=1/h \log n$ we get
\begin{eqnarray*}
\sum_{k=1}^{n-1}(n-k)\mathrm{cov}(\tilde{\Delta}_0,\tilde{\Delta}_k)
&\preceq& c_1 h^2nu_n +n M_n^2\sum_{k\geq u_n}\alpha_{k-1}\\
&\preceq& c_1 h^2nu_n+ c_2 n M_n^2\sum_{k\geq u_n}\zeta^{k-1}\\
&=&c_1h^2nu_n+c_2nM_n^2\frac{\zeta^{u_n}}{1-\zeta}\\
&=&o(nh).
\end{eqnarray*}

Therefore, $s_n^2=O(nh)$.\\

The Fuk-Nagaev's inequality (see Rio \cite{Rio1} theorem 6.2),
applied to random variables $\frac{1}{nh}\sum_{k=1}^n\tilde{\Delta}_k$, 
allows as to obtain
\begin{eqnarray*}
\mathbb{P}(|\tilde{g}_i(y)-\mathbb{E}\tilde{g}_i(y)|\geq \epsilon)
&=& \mathbb{P}\left(\left|\sum_{k=0}^{n-1}\! \tilde{\Delta}_k\right|> \epsilon nh\!\right)\\
&\leq& 4 \left(1+\frac{\lambda^2}{\delta s_n^2}\right)^{-\delta/2}+4nM_n\alpha_{u_n}\lambda^{-1}.
\end{eqnarray*}
 In order to the rate of convergence of the precendent term, we take $4\lambda=\epsilon nh$ obtaining the asymptotic inequality
\begin{eqnarray*}
\mathbb{P}(|\tilde{g}_i(y)-\mathbb{E}\tilde{g}_i(y)|\geq \epsilon)
&\preceq&4\left(1+\frac{\epsilon^2nh}{16 \delta}\right)^{-\delta/2}+\frac{16c M_n\zeta^{u_n}}{\epsilon h}.\\
\end{eqnarray*}

Thus, the result (i) follows. We can prove (ii) in a similar way.
\end{proof}

%%%%%%%%%%%%%%%%%%%%%% Lemma 2.4 %%%%%%%%%%%%%%%%%%%%%%%%%%%
 
\begin{proof}[Proof of Lemma \ref{lsesgo}]
Set $\Delta_k = Y_{k+1}K_h(y-Y_k)\car_{i}(X_{k+1})$, for $k=0\ldots n-1$. Taking conditional expectation
of $\Delta_k$ given $X_{k+1}=j,Y_{k}=u$,  considering the expression for $r_i(y)$ given in \eqref{regfunction} and using the stationarity
of model, we get
\begin{eqnarray}
\label{Sesgo}
\qquad \mathbb{E}(\Delta_k) 
&=&\mathbb{E}
\left(\mathbb{E}\left(Y_{k+1}K_h(y-Y_k)
\car_{i}(X_{k+1})\left|\right.X_{k+1}=j,Y_{k}=u\right)\right)\\
\nonumber
&=&\int r_i(u)K_h\left({y-u}\right) \mu_i p(Y_0=u)du\\
\nonumber
&=&\int K_h\left({y-u}\right)g_i(u)du.
\end{eqnarray}

Since $\hat{g}_i(y)= \frac{1}{nh}\sum_k \Delta_k$, then the equation (\ref{Sesgo}) implies that
\begin{equation}\label{expect_g}
\mathbb{E}\hat{g}_i(y)=\int K(u)g_i(y-uh)du.
\end{equation}
By second order Taylor's expansion of $g_i$ at $y$ we obtain, 
$$
g_i(y-uh)=g_i(y)-uhg_i'(y)+\frac{(uh)^2}{2}g_i{''}(\tilde{y}_u)
$$
with $\tilde{y}_u=(y-uh)(1-t)+ty$ for some $t\in[0,1]$.\\

As the kernel $K$ is assumed to be of order 2,  substituting the Taylor's approximation into \eqref{expect_g}
gives
$$
\mathbb{E}\hat{g}_i(y)-g_i(y)=\frac{h^2}{2}\int g_i{''}(\tilde{y}_u)u^2 K(u)du.
$$
From condition R2, we have that $g_i{''}$ is continuous. Then  $g_i{''}(\tilde{y})$ converge uniformly  to  $g_i{''}(y)$ 
over the compact set $\mathrm{C}$.
Hence
\begin{equation}
\label{sesgocociente}
\mathbb{E}\hat{g}_i(y)-g_i(y)=\frac{h^2}{2} g_i{''}(y)\int u^2 K(u)du + o(h^{2}).
\end{equation}
Thus,\\
\begin{equation}
\label{sesgocociente1}
\sup_{y\in \mathrm{C}}|\mathbb{E}\hat{g}_i(y)-g_i(y)| =O(h^{2}).
\end{equation}  
The same proof works for the bias of $f_i$, starting from  $$\mathbb{E}\hat{f}_i(y)=\int K(z)p_0(y-zh)\mu_i du.$$ 
\end{proof}

%%%%%%%%%%%%%%%%%%%%%%%%%%%%%%%%% Theorem 2.1 %%%%%%%%%%%%%%%%%%%%%

\begin{proof}[Proof of Theorem \ref{conuniforme}]
We start with the following triangle inequality on the positivity set of $f_i(y)$,
\begin{equation}
\label{colomb0}
|\hat{r}_i(y)-r_i(y)|\\
\leq |\hat{g}_i(y)-g_i(y)|\frac{1}{|\hat{f}_i(y)|}+
|\hat{f}_i(y)-f_i(y)|\left|\frac{r_i(y)}{\hat{f}_i(y)}\right|,
\end{equation} 
this implies the following inequality
\begin{eqnarray}
\label{colomb}
\sup_{y\in\mathrm{C}}|\hat{r}_i(y)-r_i(y)|&\leq& \sup_{y\in\mathrm{C}}|\hat{g}_i(y)-g_i(y)|\frac{1}{\inf_{y\in\mathrm{C}}|\hat{f}_i(y)|}\\
\nonumber  && \quad  +
\, \sup_{y\in\mathrm{C}}|\hat{f}_i(y)-f_i(y)|\frac{\sup_{y\in\mathrm{C}}|r_i(y)|}{\inf_{y\in\mathrm{C}}|\hat{f}_i(y)|}.
\end{eqnarray} 
According to the bias-variance decomposition, the proof of the theorem is achieved through the lemmas \ref{cvarianza} and \ref{lsesgo}, 
warranting the existence of strictly positivity of $\inf_{y\in\mathrm{C}}|\hat{f}_i(y)|$.

%Suppose that  $n^{d}h \to \infty$ for some $0<d<1$. defined by $M_n=M_0n^\gamma$ with $M_0, \gamma >0$
 
Thus, applying Lemma \ref{cvarianza} with $\epsilon=\epsilon_0\sqrt{\frac{\log n}{nh}}$, $M_n=n^{\gamma}$, $u_n=(h\log n)^{-1}$, and $\delta$ large enough so that $log(n)=o(r)$, we have
\begin{eqnarray*}
\lefteqn{\mathbb{P}\left( |\hat{g}_i(y)-\mathbb{E}\hat{g}_i(y)| \geq \epsilon_0\sqrt{\frac{\log n} {nh}}\right)}\\
&&\hspace{2cm}\preceq  4 \exp\left(-\frac{\epsilon_0^2\log(n)}{32}\right)
+c_1\frac{16n^{\gamma+\frac{1}{2}} \zeta^{u_n}}{\epsilon_0 (\log n)^{\frac{1}{2}}h^{\frac{1}{2}}} +  c_2 \frac{n^{1+\gamma(2-s)}}{\epsilon_0^2 \log(n) h}\\
&&\hspace{2cm}\preceq  n^{-\frac{\epsilon_0^2}{32}}+c_1\frac{16n^{\gamma+ \frac{1}{2}} \zeta^{u_n}}{\epsilon_0(\log n)^{\frac{1}{2}}h^{\frac{1}{2}} } +  c_2 \frac{n^{1+\gamma(2-s)}}{\epsilon_0^2 \log(n) h}.
\end{eqnarray*}
For  $\vartheta=(s-2)\gamma-d-2>0$ and $\frac{\epsilon_0^2}{32} = (s-2)\gamma-d-1$
then,
\begin{equation}\label{ineq_prob}
\mathbb{P}\left(|\hat{g}_i(y)-\mathbb{E}\hat{g}_i(y)|\geq \epsilon_0\sqrt{\frac{\log n}{nh}}\right)\preceq c n^{-(1+\vartheta)}.
\end{equation}
Applying the Borel-Cantelli Lemma, the almost surely pointwise convergence of $|\hat{g}_i(y)-\mathbb{E}\hat{g}_i(y)|$ to $0$ is proved.
We proceed analogously in order to obtain the  almost surely pointwise convergence of  
$|\hat{f}_i(y)-\mathbb{E}\hat{f}_i(y)|\to 0$.

According to Lemma \ref{lsesgo} we have
\begin{eqnarray*}
\inf_{y\in\mathrm{C}}|\hat{f}_i(y)|&\geq & \inf_{y\in\mathrm{C}}|f_i(y)| - \sup_{y\in\mathrm{C}}|\hat{f}_i(y)-\mathbb{E}\hat{f}_i(y)|
-\sup_{y\in\mathrm{C}}|\mathbb{E}\hat{f}_i(y)-f(y)|\\
&\geq&\frac{1}{2}\inf_{y\in\mathrm{C}}|f_i(y)| >0.
\end{eqnarray*}

Thus, the previous results obtained from lemmas \ref{cvarianza} and \ref{lsesgo} and inequality \eqref{colomb0}  given the
pointwise convergence of $|\hat{r}_i(y)-r_i(y)|$.

In order to obtain the uniform convergence on a compact set $\mathrm{C}$, we only need to prove an asymptotic
inequality of type \eqref{ineq_prob} for the term $\sup_{y\in\mathrm{C}}|\hat{g}_i(y)-\mathbb{E}\hat{g}_i(y)|$, and 
analogously for $\sup_{y\in\mathrm{C}}|\hat{f}_i(y)-\mathbb{E}\hat{f}_i(y)|$, in inequality \eqref{colomb}. For this,
we proceed by using truncation device as in Ango Nze {\it et. al.} \cite{Ango-Doukhan-Buhulmann}, assuming the moment condition M1.

Let us set $\Delta_k=Y_{k+1}K_h(y-Y_k)\car_{i}(X_{k+1})$ and the truncated variable
$\tilde{\Delta}_k=\Delta_k\car_{\{|Y_{k+1}|\leq M_n\}}$. Then, we define the truncated kernel estimator of $g_i$ by
$$
\tilde{g}_i(y)=\frac{1}{nh}\sum_{k=0}^{n-1}\tilde{\Delta}_k.
$$
Since $\|K\|_{\infty}<\infty$, taking $M_n=M_0\log n$, then clearly we obtain
$$
\mathbb{P}\left( \sup_{y\in\mathrm{C}}| \hat{g}_i(y)-\tilde{g}_i(y)| >0\right)\leq n\mathbb{P}(|Y_1|> M_0\log n )
\leq \mathbb{E}(\exp(|Y_1|)) n^{1-M_0}
$$
and, by the Cauchy-Schwarz inequality and condition R3, 
\begin{eqnarray*}
\lefteqn{\sup_{y\in\mathrm{C}} \mathbb{E}\left(|\hat{g}_i(y)-\tilde{g}_i(y)|\right)}\\
&&\hspace{1cm}\leq
\frac{1}{h}\sup_{y\in\mathrm{C}} \mathbb{E}\left(|Y_1|\car_{\{|Y_1|>M_0\log(n)\}}K_h\left({y-Y_0}\right)\car_i(X_1)\right)\\
&&\hspace{1cm} \leq\frac{1}{h} n^{-M_0/2}  \mathbb{E}(\exp(|Y_1|))^{1/2} \sup_{y\in\mathrm{C}}\mathbb{E}\left(|Y_1|^2K_h^2\left({y-Y_0}\right)\car_i(X_1)\right)^{1/2}\\
&&\hspace{1cm}\leq c_4  \frac{n^{-M_0/2}}{h^{1/2}}.% \to0\ (n\to\infty).
\end{eqnarray*}
Now, we reduce computations to a chaining
argument, (see \cite{Ferraty}, p\'ags. 32 and 78) 
for the case of a kernel estimator with bounded variables. Let  $\mathrm{C}$ be covered by a 
finite number $\nu_n$ of intervals $B_k$ with diameter $2L_n$ and center at $t_k$. Then,
\begin{eqnarray*}
\lefteqn{\sup_{y\in\mathrm{C}}|\tilde{g}_i(y)-\mathbb{E}\tilde{g}_i(y)|}\\
&\leq&\max_{k=1,\ldots,\nu_n}|\tilde{g}_i(t_k)-\mathbb{E}\tilde{g}_i(t_k)|
+\sup_{y\in\mathrm{C}}|\tilde{g}_i(t_k)-\tilde{g}_i(y)|+\sup_{y\in\mathrm{C}}|\mathbb{E}\tilde{g}_i(y)-\mathbb{E}\tilde{g}_i(t_k)|
 \end{eqnarray*}

Now, let us examine each term in the right-hand side above. First, we have from Lemma \ref{cvarianza}
\begin{eqnarray*}
\lefteqn{\mathbb{P}\left(\max_{k=1,\ldots,\nu_n}|\tilde{g}_i(t_k)-
\mathbb{E}\tilde{g}_i(t_k)|>\frac{\varepsilon_0}{2}\sqrt{\frac{\log n}{nh}}\right)}\\
&&\hspace{2.5cm} \leq \sum_{k=1}^{\nu_n}\mathbb{P}\left(|\tilde{g}_i(t_k)-\mathbb{E}\tilde{g}_i(t_k)|>\frac{\varepsilon_0}{2}\sqrt{\frac{\log n}{nh}}\right)\\
&&\hspace{2.5cm} \preceq  \nu_n \left(4n^{-\frac{\epsilon_0^2}{128}}+c_1\frac{ 32 n^{1/2}M_n \zeta^{u_n}}{\epsilon_0(\log n)^{1/2}h^{1/2}}\right).
\end{eqnarray*}
For the second and third terms, we use the following inequality obtained from condition R1, 
\begin{eqnarray*}
|\tilde{g}_i(t_k)-\tilde{g}_i(y)|&\leq& M_n\frac{1}{nh}
\sum_{k=1}^n\left| K_h\left({t_k-Y_k}\right) -K_h(y-Y_k) \right|\\
&\leq& c \frac{M_n}{h^{1+\beta}}|y-t_k|^\beta\leq c \frac{M_nL_n^{\beta}}{h^{1+\beta}},
\end{eqnarray*}
for some constants $c, \beta >0$.
Therefore, 
\begin{eqnarray*}
\lefteqn{\mathbb{P}\left(\sup_{y\in\mathrm{C}}|\hat{g}_i(y)-\mathbb{E}\hat{g}_i(y)|\geq \epsilon_0\sqrt{\frac{\log n}{nh}}\right)}\\
& \preceq & \nu_n \left(4n^{-\frac{\epsilon_0^2}{128}}+c_1\frac{ 32 n^{1/2}M_n \zeta^{u_n}}{\epsilon_0(\log n)^{1/2}h^{1/2}}\right) +
2\mathbb{P}\left(\frac{M_nL_n^{\beta}}{h^{1+\beta}}\geq\frac{\epsilon_0}{2}\sqrt{\frac{\log n}{nh}}\right)\\
&&+\, c_4 \frac{n^{-M_0/2}}{h^{1/2}}.
\end{eqnarray*}
Let us set $L_n^{\beta}=n^{-\frac{1}{2}}h^{\frac{1}{2}+\beta}M_n^{-1}$ and $\nu_n=c_5/L_n$ 
we obtain 
\begin{eqnarray*}
\lefteqn{\mathbb{P}\left(\sup_{y\in\mathrm{C}}|\hat{g}_i(y)-\mathbb{E}\hat{g}_i(y)|\geq \epsilon_0\sqrt{\frac{\log n}{nh}}\right)}\\
&&\hspace{1.5cm} \preceq \frac{c_5}{L_n}  \left(4n^{-\frac{\epsilon_0^2}{128}}+c_1\frac{ 32 n^{1/2}M_n \zeta^{u_n}}{\epsilon_0(\log n)^{1/2}h^{1/2}}\right)+  c_4 \frac{n^{-M_0/2}}{h^{1/2}}.
\end{eqnarray*}

Taking $u_n=(h\log n)^{-1}$, $\vartheta=\frac{\epsilon_0^2}{128} +\frac{1+d}{2\beta} + d-1>0$, and $M_0=2(\vartheta+1)+d$. We have
\begin{equation}\label{ineq_prob_1}
\mathbb{P}\left(\sup_{y\in\mathrm{C}}|\hat{g}_i(y)-\mathbb{E}\hat{g}_i(y)|\geq \epsilon_0\sqrt{\frac{\log n}{nh}}\right)\preceq
c n^{-(1+\vartheta)}.
\end{equation}
Hence, the Borel-Cantelli Lemma implies the almost surely convergence of term $\sup_{y\in\mathrm{C}}|\hat{g}_i(y)-\mathbb{E}\hat{g}_i(y)|$. 

The uniform convergence over a compact set of the regression function  $\hat{r}_i$  follows in the same way that for the a.s. 
pointwise convergence.
\end{proof}

\begin{remark} Note that in the proof of the a.s. pointwise convergence, the probability term in \eqref{ineq_prob} is sumable if 
$\vartheta=(s-2)\gamma-d-2>0$. This is only possible if $s>2$, and so the restriction imposed in condition M2 arises.  

From the asymptotic inequalities \eqref{ineq_prob} and \eqref{ineq_prob_1} we can notice that the convergence rate in Theorem~\ref{conuniforme}
is $\sqrt{\frac{\log n}{ nh }}$.\\
\end{remark}

%%%%%%%%%%%%%%%%%%%%%%%%%%%%%%%%% Lemma 3.1 %%%%%%%%%%%%%%%%%%%%%%%%

\begin{proof}[Proof of Lemma \ref{lema_GradEst}]
 Taking expectation in (\ref{potencialU}), it follows that (\ref{constrastelimite}) is true. 
For the second part, we use simply the fact that the potential $U$ is absolutly integrable with respect to the measure $\mathbb{P}(X_{1:n}^{t}=x|Y_{0:n},\theta')\mu_{c}(dx)$ with $\mu_{c}(dx)$   the counting measure on $\{1,\ldots, m\}^n$. So, by the dominated convergence theorem we have 

\begin{eqnarray*}
\lefteqn{\mathbb{E}_{\theta'}\left(\nabla_{\theta}U(y,Y_{1:n},X_{1:n}^{t},\theta)|\mathcal{F}_{t-1})\right)}\\
&&\hspace{3.5cm}=\int\nabla_{\theta}U(y,Y_{1:n},x,\theta)\mathbb{P}(X_{1:n}^{t}=x|Y_{0:n},\theta')\mu_{c}(dx) \\
&&\hspace{3.5cm}=\nabla_{\theta} \int U(y,Y_{1:n},x,\theta)\mathbb{P}(X_{1:n}^{t}=x|Y_{0:n},\theta')\mu_{c}(dx)\\
&&\hspace{3.5cm}=\nabla_{\theta}u(y,Y_{1:n},\theta).\\
\end{eqnarray*}
\end{proof}

%%%%%%%%%%%%%%%%%%%%%%%%%%%%%%%%%%%%%%%%%%%%%%%%%%%%%%%%%%%%%%%%%%%%%%%%%%%%%%%%
%\section*{Acknowledgements}
%And this is an acknowledgements section with a heading that was produced by the
%$\backslash$section* command. Thank you all for helping me writing this
%\LaTeX\ sample file. See \ref{suppA} for the supplementary material example.

%\begin{supplement}
%\sname{Supplement A}\label{suppA}
%\stitle{SAEM algorithm}
%\slink[url]{http://www.e-publications.org/ims/support/dowload/imsart-ims.zip}
%\sdescription{Dum esset rex in
%accubitu suo, nardus mea dedit odorem suavitatis. Quoniam confortavit
%seras portarum tuarum, benedixit filiis tuis in te. Qui posuit fines tuos}
%\end{supplement}

%%%%%%%%%%%%%%%%%%%%%%%%%%%%%%%%%%%%%%%%%%%%%%%%%%%%%%%%%%%%%%%%%%%%%%%%%%%%%%%

\end{document}